\newcommand \pa {\partial}
\newcommand \p {\partial}
\newcommand \lra {\longrightarrow}
\newcommand \pp {\mathfrak{p}}
\newcommand \I {\iota}
\newcommand \absv [1]{\left \lvert #1 \right \rvert }
\newcommand \lp {\left(}
\newcommand \rp {\right)}
\newcommand{\set}[1]{\left\{ #1 \right\} }
\newcommand{\wt}[1]{\widetilde{#1}}
\newcommand{\ov}[1]{\overline{#1}}
\newtheorem{theorem}{Theorem}
\newtheorem{corollary}[theorem]{Corollary}
\newtheorem{lemma}[theorem]{Lemma}
\newtheorem{example}[theorem]{Example}
\newtheorem{proposition}[theorem]{Proposition}
\newtheorem{remark}[theorem]{Remark}
\newtheorem*{acknowledgements}{Acknowledgements}
\newtheorem{definition}[theorem]{Definition}
\newcommand\cA{\mathcal{A}}
\newcommand\cF{\mathcal{F}}
\renewcommand\Im{\operatorname{Im}}
\newcommand\phg{\operatorname{phg}}
\newcommand\bbN{\mathbb{N}}
\newcommand\tU{\widetilde{\mathcal{U}}}
\newcommand\cU{\mathcal{U}}
\newcommand\tV{\widetilde{V}}
\newcommand\tL{\widetilde{L}}
\newcommand\tF{\widetilde{F}}
\newcommand\pr{\operatorname{pr}}
\newcommand\bbB{\mathbb{B}}
\newcommand\bbR{\mathbb{R}}
\newcommand\bbS{\mathbb{S}}
\newcommand\bbZ{\mathbb{Z}}
\newcommand\fob{\mathcal{F}}
\newcommand\foc{\mathcal{F}-c}
\newcommand\cf{cf\@. }
\newcommand\SO{\operatorname{SO}}
\title[Hodge cohomology of foliated boundary metrics]{Hodge cohomology of some foliated boundary and foliated cusp metrics}
\author{Jesse Gell-Redman}
\address{Department of Mathematics, University of Toronto}
\email{jgell@math.toronto.edu}
\author{Fr\'ed\'eric Rochon}
\address{Département de Mathématiques, UQÀM}
\email{rochon.frederic@uqam.ca}
\begin{document}

\maketitle

\begin{abstract}
For fibred boundary and fibred cusp metrics,  Hausel, Hunsicker, and Mazzeo identified the
space of $L^2$ harmonic forms of fixed degree with the images of maps between intersection
cohomology groups of an associated stratified space obtained by
collapsing the fibres of the fibration at infinity onto its base.  In
the present paper, we obtain a generalization of this result to
situations where, rather than a fibration at infinity, there is a Riemannian foliation
with compact leaves admitting a resolution by a fibration.   If the
associated stratified space (obtained now by collapsing the leaves of
the foliation) is a Witt space and if the metric considered is a foliated cusp metric, then no such resolution is required.

\end{abstract}

\tableofcontents

\section{Introduction}

The Hodge theorem states that for a complete, compact Riemannian
manifold without boundary, the
space of $L^{2}$ harmonic forms -- the `Hodge cohomology' -- is
isomorphic to the de Rham cohomology.  For manifolds that are either not
complete or not compact, no general relationship between the Hodge
cohomology and a topological invariant is known, but there is a wealth of Hodge type theorems in various
settings: on manifolds with cylindrical ends \cite{APSI}, on singular
algebraic varieties
\cite{CGM1982}, on locally symmetric spaces \cite{Z1982},
\cite{SS1990}, on asymptotically geometrically finite hyperbolic
quotients \cite{Ma1988}, \cite{MP1990}, and the well-known work of
Cheeger \cite{C1980} and Nagase \cite{Nagase1987} (see also \cite{BHS1992}), which relates the Hodge cohomology of
manifolds
with iterated conical singularities with the
intersection cohomology groups of Goresky and Macpherson,
\cite{GM1980}, \cite{GM1983}.  
These same intersection cohomology groups appear in the work of
Hausel, Hunsicker, and Mazzeo \cite{HHM2004} on the Hodge cohomology of
fibred boundary and
fibred cusp metrics, two natural geometries defined on a
smooth manifold whose boundary  is diffeomorphic to
a fibration.  

Our goal in this paper is to extend the results of
Hausel, Hunsicker, and Mazzeo to the
more general case where the boundary is diffeomorphic to a Seifert
fibration.  A Seifert fibration is, loosely speaking, a 
foliation whose space of leaves is an orbifold. See
Section \ref{sfs.0} for a detailed description.  On such manifolds, fibred
boundary and fibred cusp metrics have natural analogues:
\textit{foliated} boundary and
\textit{foliated} cusp metrics as introduced in \cite{pomfb}, and it
is these metrics whose Hodge cohomology we study here.  To keep the analysis part of the paper tractable, we will
further assume that the Seifert fibration is
\textit{good}, meaning that the space of leaves of the boundary foliation is the 
quotient of a smooth compact manifold by a smooth, properly discontinuous action of
a finite group.  This condition is needed only in Section \ref{wtl.0},
and we hope to remove it in a future work.

Thus, let $M$ be a non-compact
manifold inside a compact, smooth manifold with boundary $\ov{M},$ whose boundary $\p
M = \ov{M} - M$ is the total space of a Seifert fibration $\cF$, for the moment not necessarily good.  
Let $B$ denote the space of leaves of $\mathcal{F}$,
and let
\begin{equation}\label{eq:B}
  \pi \colon \p M \lra B
\end{equation}
be the associated projection.    Let $x$ be a boundary defining function (b.d.f.) for $\p M$, i.e.\ 
$x \in C^{\infty}(\ov{M})$ with  $x^{-1}(0) = \p M$, $dx \rvert_{\p M} \neq 0$ and $x>0$ on $M$.  For
small $\epsilon > 0$, the set $x^{-1}([0,\epsilon))$ is diffeomorphic
to $\p M \times [0,\epsilon)_{x}$, and we extend the projection $\pi$ to
this neighborhood of the boundary in the obvious way. 
Then an exact \textbf{foliated boundary metric} is a Riemannian metric which on
$\p M \times [0,\epsilon)_{x}$ takes the form
\begin{equation}
  \label{eq:foliatedboundarymet}
  g_{\fob} = \frac{dx^{2}}{x^{4}} + \frac{\pi^*h}{x^{2}} + k,
\end{equation}  
where $h$ is an orbifold Riemannian metric on $B$ and $k$ is a $(0,2)$-tensor
which restricts to a Riemannian metric on each leaf of the foliation $\cF$.  
Similarly, an exact \textbf{foliated cusp metric} is a metric of the form
\begin{equation}
  \label{eq:foliatedcuspmet}
   g_{\foc} := \frac{dx^{2}}{x^{2}} + \pi^{*} h + x^{2} k,
\end{equation}
with $h$ and $k$ as above.  General (i.e.\ non-exact) foliated metrics
are permitted to have ``cross-terms''; see Section
\ref{sec:weighttoint}. 
When the base $B$ of  the Seifert fibration is smooth, $\cF$
corresponds to the fibres of a smooth fibration, and we recover from
these definitions the notion of exact fibred boundary metric and
exact fibred cusp metric considered for instance in \cite{HHM2004}.

For a choice of foliated boundary or foliated cusp metric $g$ on
$M$, let $L^2\mathcal{H}^k(M,g)$
denote the space of $L^{2}$ harmonic forms of degree $k$, i.e.\ those  $\omega \in \Omega^{k}(M)$
with $\left \| \omega \right \|_{L^{2}(g)} < \infty$ solving $d \omega = 0$ and $\delta
\omega = 0$, where $\delta$ is the adjoint of $d$ with respect to the
$L^{2}$ norm induced by $g$.  To relate $L^{2}\mathcal{H}^{k}(M, g)$ with some
topological data, we 
consider, instead of $M$, the space $X$
obtained by collapsing the leaves of the foliation on $\p M$ onto the space of leaves $B$.  To be precise, let
\begin{equation}
  \label{eq:X}
  X := \ov{M} / \sim, \mbox{ where } p \sim q \iff   p=q \; \mbox{or}
  \; p,q \in \p M \mbox{ with } \pi(p) =
  \pi(q).
\end{equation}
There is a corresponding collapsing map $c_{\pi}: \overline{M}\to X$
which is the identity on $M$ and is given by the projection $\pi$ on
$\pa M$.  We identify $B$ with $c_{\pi}(\p M) \subset X$.
The space $X$ is a (smoothly) stratified
space (see for instance \cite{ALMP2011} or \cite{DLR2011} for a
definition), and as such, the intersection homology and cohomology groups of
Goresky and MacPherson \cite{GM1980} can be defined thereon.  These
groups are not homotopically invariant like singular homology and cohomology
groups, but they are topological invariants.
They are defined in terms of a perversity
function, i.e.\ a map $\pp \colon \set{0, 1, \dots, n} \lra \mathbb{N}$
satisfying $\pp(\ell) \le \pp(\ell + 1) \le \pp(\ell) + 1$, and a
stratification, i.e.\ is a nested sequence
\begin{equation}\label{eq:strata}
X \supset X_{n - 2}
\supset \dots \supset X_{j} \supset \dots \supset X_{0},
\end{equation}
where
$X - X_{n - 2}$ is a smooth manifold and $X_{j} - X_{j - 1}$ is either
empty or a manifold of dimension $j$.  The group $I\!H^{k}_{\pp}(X)$ is the $k^{th}$ cohomology group of
the complex of cochains defined on chains which intersect each stratum
of codimension $\ell$ in a set of dimension at most $k - \ell +
\pp(\ell)$.  
The original intersection homology theory was developed for \textbf{standard}
perversities, i.e.\
those satisfying the additional condition $\pp(0) = \pp(1) = 0$, but
general perversities are now in common use, see e.g.\ \cite{HHM2004},
\cite{N1999}.  For standard perversities, the group
$I\!H^{k}_{\pp}(X)$ turns out to be independent of the
stratification.  For more on intersection cohomology, see \cite{B1983}.

Let
\begin{equation}
  \label{eq:dimensions}
  \begin{split}
    n &:= \dim M \\
    b &:= \dim B  \\
    f &:= n - b - 1 ,
  \end{split}
\end{equation}
so $f$ is the dimension of a typical leaf of $\mathcal{F}$.
As we discuss in Section \ref{sfs.0}, $X$ carries a
natural stratification induced by the orbifold structure of the space of leaves, $B$.  In particular,
the spaces $X_{n - 2}, \dots, X_{n - (f + 1)}$ are all equal to $B$.
In Corollary~\ref{icp.1} below, \textit{we show that the
intersection cohomology of $X$ depends only on 
$\pp(f+1)$.}  Thus, the following definition makes sense.
For $j\in \bbN$, let
\begin{equation}
  \label{eq:inthom1}
  I\!H^{k}_{j}(X, B) := \left\{
    \begin{array}{ccc}
      H^{k}(X - B) &\mbox{ if } & j \leq - 1\\
      I\!H^{k}_{\pp}(X) \mbox{ where }
  \pp(f + 1) = j. &\mbox{ if } & 0 \leq j \leq f - 1 \\
      H^{k}(X,B) &\mbox{ if } & f \leq j,
    \end{array}
    \right.
\end{equation}
c.f.\ Section 2.2, equation (9) of \cite{HHM2004}.

\begin{theorem}\label{thm:main1}
  Let $\ov{M}$ be a manifold with boundary, $\p M = \ov{M} - M$.  Let $\cF$ be a good Seifert fibration on $\pa M$, and let $g_{\fob}$ be a foliated boundary metric
  on $M$. Then for any degree $0 \leq k \leq n$, there are natural
  isomorphisms
  \[
  L^2\mathcal{H}^k(M,g_{\fob}) \longrightarrow \left\{
    \begin{array}{ll}
      \mbox{\rm Im} \, \big(I\!H^k_{f+\frac{b+1}{2} -k}(X,B)
      \longrightarrow I\!H^k_{f+\frac{b-1}{2} - k}(X,B) \big) \qquad
      & \mbox{$b$ odd}  \\
     I\!H^k_{f+\frac{b}{2} -k}(X,B) & \mbox{$b$ even},
    \end{array} \right.
  \]
with $B$ as in \eqref{eq:B} and $X$ as in \eqref{eq:X}.
\end{theorem}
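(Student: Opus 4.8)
The plan is to follow the two-stage strategy of Hausel, Hunsicker, and Mazzeo \cite{HHM2004}. In the first stage one identifies $L^2\mathcal{H}^k(M,g_{\fob})$ with the image of a natural map between two weighted $L^{2}$ de Rham cohomology groups of $M$; in the second stage one identifies those two groups with intersection cohomology groups of $X$. The goodness of $\cF$ enters only in the analytic first stage, through a resolution of the boundary Seifert fibration.

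For the first stage one studies the Hodge Laplacian $\Delta_{g_{\fob}}$ acting on $L^{2}(M,g_{\fob})$, an elliptic operator adapted near $\p M$ to the foliated boundary structure. Since $\cF$ is good, the boundary foliation resolves to a genuine fibration $\wt{\pi}\colon\p\wt{M}\to\wt{B}$ over a smooth compact base, carrying a compatible action of a finite group $\Gamma$ with $\p\wt{M}/\Gamma\cong\p M$ (Section~\ref{sfs.0}); lifting the boundary geometry accordingly to a resolution $\wt{M}$ of $\ov{M}$ with fibred boundary, on which $g_{\fob}$ becomes a $\Gamma$-invariant fibred boundary metric, one can run the $\phi$-pseudodifferential calculus of Mazzeo--Melrose just as in \cite{HHM2004}. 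Thus $\Delta_{g_{\fob}}$ is Fredholm on suitably weighted Sobolev spaces, its $L^{2}$ null space is finite-dimensional and polyhomogeneous, and the standard Hodge--de Rham argument yields
\[
 L^2\mathcal{H}^k(M,g_{\fob})\;\cong\;\operatorname{Im}\big(H^k_{(2),+}(M)\longrightarrow H^k_{(2),-}(M)\big),
\]
where $H^k_{(2),\pm}(M)$ is the cohomology of the complex of forms $\omega$ with $\omega, d\omega\in x^{\delta_\pm}L^{2}(M,g_{\fob})$ at the two critical weights $\delta_\pm$ determined by $b$, $f$ and $k$, and the arrow is induced by inclusion of complexes; the passage between $\wt{M}$ and $M$ is just passage to $\Gamma$-invariants, which is harmless since $\Gamma$ is finite (Section~\ref{wtl.0}). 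When $b$ is even the two critical weights coincide, so the image is a single group, while for $b$ odd they differ by one unit, producing the image of a genuine map.

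For the second stage one identifies $H^k_{(2),\pm}(M)$ with an intersection cohomology group of $X$, and here no resolution is needed. By the local, sheaf-theoretic description of intersection cohomology — in the spirit of Cheeger's computation on cones \cite{C1980} — it suffices to work out the cohomology of the link of each stratum of $X$ and truncate it at the degree dictated by the weight. The generic codimension-$(f+1)$ stratum of $X$ is an open dense subset of $B$ with link a leaf $F$, and one checks that the relation between $\delta_\pm$ and the truncation degree is exactly the perversity condition with $\pp(f+1)$ equal to $f+\tfrac{b\pm1}{2}-k$, respectively to $f+\tfrac{b}{2}-k$ when $b$ is even; the deeper strata of $X$ lying inside $B$, which come from the orbifold singular locus, do not affect the answer by Corollary~\ref{icp.1}. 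At the most extreme weights one lands instead in $H^{k}(X-B)=H^{k}(M)$ or in $H^{k}(X,B)$, in accordance with the definition \eqref{eq:inthom1} of $I\!H^k_j(X,B)$. Assembling the two stages, and keeping track of the naturality of each isomorphism, gives the statement of the theorem.

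The hardest part is the analytic first stage: building a generalized inverse for $\Delta_{g_{\fob}}$ inside the $\phi$-calculus and proving polyhomogeneity of harmonic forms, all done $\Gamma$-equivariantly on the resolution, and making precise the resolution itself near the points of $B$ over which the Seifert fibration degenerates — which is exactly what forces $\cF$ to be assumed good. A secondary difficulty is the combinatorial bookkeeping of weights, parities and perversities needed to pin down $\delta_\pm$ and to check that the resulting labels match \eqref{eq:inthom1}; for non-exact foliated boundary metrics one must moreover verify that the lower-order cross-terms leave all the relevant cohomology groups unchanged.
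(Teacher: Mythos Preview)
Your two-stage strategy is exactly the paper's, and the idea of using the goodness of $\cF$ to lift to a fibred-boundary situation where the analysis of \cite{HHM2004} applies is correct in spirit. Two points deserve comment.

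First, a minor technical correction: there is in general no global resolution $\wt{M}$ of $\ov{M}$ on which $\Gamma$ acts --- the good Seifert hypothesis resolves only $\p M$, and $\Gamma$ has no reason to act on the interior of $M$. The paper therefore does not pass to a global cover; it lifts only the \emph{collar} $\p M\times[0,\epsilon)_x$ to $\p\wt{M}\times[0,\epsilon)_x$, builds a $\Gamma$-invariant parametrix for the Hodge--de~Rham operator there using the $\phi$-calculus, pushes it down, and patches with an interior parametrix on $M$ itself (Propositions~\ref{wtl.2}, \ref{wtl.3}, Theorem~\ref{wtl.4}). Your argument is easily repaired along these lines.

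Second, and more substantively, you have the relative difficulty of the two stages backwards, and this shows up as a genuine gap in your treatment of the topological stage. In this paper the analytic first stage is essentially a black-box import from \cite{HHM2004} once the collar trick above is in hand; the new work lies in the second stage (Theorem~\ref{thm:wcihfol}), precisely because $X$ now has depth greater than one. Your appeal to Corollary~\ref{icp.1} to dispose of the deeper strata is circular: that corollary is itself a \emph{consequence} of Theorem~\ref{thm:wcihfol}. To prove $W\!H^{*}(g_{\foc},a,M)\simeq I\!H^{*}_{[a+f/2]}(X,B)$ one must verify the local criterion of Proposition~\ref{thm:localtoglobal} at \emph{every} stratum, and for the deeper strata the link is no longer the leaf $\wt{F}$ but a quotient $L=\wt{L}/\Gamma$ of the stratified space described in Lemma~\ref{regn.1}. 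The paper handles this by induction on depth, showing (Lemma~\ref{abp.1} together with the diagram \eqref{eq:square}) that the inclusion $L\hookrightarrow C_{1}(L)$ induces an isomorphism in weighted cohomology and that the local weighted cohomology of $C_1(L)$ injects into that of $C_\epsilon(\wt{F})$, whence it vanishes in the required degree range. That induction is where most of the effort in the paper actually lies, and your sketch does not supply it.
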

\begin{theorem}\label{thm:main2}
 Notation and assumptions as in Theorem \ref{thm:main1}, let $g_{\foc}$ be a foliated cusp metric
  on $M$. Then for $0 \leq k \leq n$, there is a natural isomorphism
  \[
  L^2\mathcal{H}^k(M,g_{\foc}) \longrightarrow \mbox{\rm Im} \, \left(
    I\!H^k_{\underline{\mathfrak{m}}}(X,B) \longrightarrow
    I\!H^k_{\overline{\mathfrak{m}}}(X,B) \right)
  \]
  where $\underline{\mathfrak{m}}$ and $\overline{\mathfrak{m}}$ are the lower
  middle and upper middle perversities, (see Section \ref{sec:proofs}.) \end{theorem}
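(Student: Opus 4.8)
The plan is to prove Theorem~\ref{thm:main2} along the same lines as Theorem~\ref{thm:main1}: first reduce to the case of a genuine fibred cusp metric, for which the identification is the theorem of Hausel--Hunsicker--Mazzeo \cite{HHM2004}, and then descend the resulting isomorphism through a finite quotient. Since $\cF$ is a \emph{good} Seifert fibration, the results of Section~\ref{wtl.0} furnish a compact manifold with boundary $\wt M$ carrying a genuine boundary fibration $\wt\pi\colon\p\wt M\lra\wt B$ over a smooth compact $\wt B$, a b.d.f.\ $\wt x$ for $\p\wt M$, and a smooth, properly discontinuous action of a finite group $\Gamma$ on $\wt M$ which preserves $\p\wt M$ and $\wt x$, makes $\wt\pi$ equivariant for an action on $\wt B$, and for which the quotient $\wt M/\Gamma$ is identified with $\ov M$ in a way taking $\wt\pi$ to $\pi$ and $\wt B/\Gamma$ to $B$; in particular $g_{\foc}$ lifts to a $\Gamma$-invariant fibred cusp metric $\wt g$ on $\wt M$. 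Collapsing the fibres of $\wt\pi$ produces a stratified space $\wt X$ with a $\Gamma$-action for which $\wt X/\Gamma=X$ and $\wt X\setminus\wt B$ is the interior of $\wt M$.

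The first step is the descent of Hodge cohomology. Because $\Gamma$ is finite and the quotient map $\wt M\to M$ is, away from the lower strata, a local isometry from $(\wt M,\wt g)$ to $(M,g_{\foc})$, pullback followed by averaging over $\Gamma$ identifies $L^2\mathcal H^k(M,g_{\foc})$ with the $\Gamma$-invariant part of $L^2\mathcal H^k(\wt M,\wt g)$; here one uses that $\wt g$ is complete, so that $L^2$ harmonic forms agree with reduced $L^2$ cohomology, and that the $\Gamma$-action commutes with $d$ and the Hodge Laplacian. The second step applies the fibred cusp case to $(\wt M,\wt g)$ --- quoting \cite{HHM2004}, or re-running the collar analysis developed in the earlier sections in the smooth-base situation --- to obtain a \emph{natural} isomorphism of $L^2\mathcal H^k(\wt M,\wt g)$ with $\Im\big(I\!H^k_{\underline{\mathfrak{m}}}(\wt X,\wt B)\lra I\!H^k_{\overline{\mathfrak{m}}}(\wt X,\wt B)\big)$. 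Naturality makes this isomorphism $\Gamma$-equivariant, so passing to invariants --- an exact operation on $\mathbb{Q}$-vector spaces --- gives $L^2\mathcal H^k(M,g_{\foc})\cong\Im\big(I\!H^k_{\underline{\mathfrak{m}}}(\wt X,\wt B)^\Gamma\lra I\!H^k_{\overline{\mathfrak{m}}}(\wt X,\wt B)^\Gamma\big)$.

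The third step is topological: identify $I\!H^k_{\pp}(\wt X,\wt B)^\Gamma$ with $I\!H^k_{\pp}(X,B)$ for $\pp\in\{\underline{\mathfrak{m}},\overline{\mathfrak{m}}\}$, compatibly with the two perversity maps. For a finite group acting simplicially on a pseudomanifold and respecting the stratification, rational intersection cohomology of the quotient is the invariant subspace of that of the total space; applying this to $\wt X\to X$, and invoking Corollary~\ref{icp.1} so that only the value of the perversity on the stratum of codimension $f+1$ is relevant and the additional orbifold strata of $X$ do no harm, gives the identification and, by functoriality, intertwines the lower-middle-to-upper-middle perversity maps. Stringing the three steps together proves the theorem; when $B$ is smooth (so $\Gamma$ is trivial) this recovers \cite{HHM2004}.

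The main obstacle lies in the third step, together with the equivariance claim in the second. One must check that the resolution of Section~\ref{wtl.0} can be arranged so that $\wt X\to X$ is an honest stratified quotient, that ``quotient equals invariants'' for intersection cohomology persists for the non-classical middle perversities on the relevant strata --- this is precisely where the \emph{goodness} of $\cF$ enters --- and that $\Gamma$ commutes with the canonical map $I\!H^k_{\underline{\mathfrak{m}}}\to I\!H^k_{\overline{\mathfrak{m}}}$ so that taking invariants commutes with it. The equivariance needed in the second step rests on the isomorphism of \cite{HHM2004} being natural, i.e.\ on the construction of harmonic representatives from intersection cycles being canonical; should one prefer not to rely on this, the collar computation of $L^2\mathcal H^k(M,g_{\foc})$ can be performed directly on $M$, the resolution then serving only to supply, near the singular leaves, the Hodge theory of the fibre that the orbifold points of $B$ would otherwise obstruct.
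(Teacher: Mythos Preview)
Your proposal has a genuine gap at its very first move. The hypothesis that $\cF$ is a good Seifert fibration (Definition~\ref{tfb.2b}) says only that the \emph{boundary} $\partial M$ is a quotient $\wt Y/\Gamma$ of a manifold $\wt Y$ carrying an honest fibration; it does \emph{not} say that $\ov M$ itself arises as $\wt M/\Gamma$ for some compact manifold with boundary $\wt M$ on which $\Gamma$ acts. Section~\ref{wtl.0} never constructs such a global $\wt M$: it only lifts the collar $\partial M\times[0,\epsilon)_x$ to $\wt Y\times[0,\epsilon)_x$, builds a $\Gamma$-invariant parametrix there, pushes it down, and glues it to an interior parametrix on $M$ itself (see the proof of Proposition~\ref{wtl.2}). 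In general the finite cover $\wt Y\to\partial M$ has no reason to extend to a finite cover of $\ov M$; for instance, if the interior of $M$ is simply connected and $\wt Y$ is connected with $|\Gamma|>1$, no connected $\wt M$ with $\wt M/\Gamma\cong\ov M$ can exist. Since your steps one through three all rest on a global $\wt M$, a global $\wt X$, and a global $\Gamma$-action, the argument does not get started.

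The paper's route avoids any global cover. It first proves directly on $X$, by a sheaf-theoretic induction on depth using only \emph{local} resolutions of regular neighbourhoods (Lemma~\ref{regn.1}), that weighted $L^2$ cohomology computes intersection cohomology (Theorem~\ref{thm:wcihfol}); this step does not even require $\cF$ to be good. Separately, it shows $L^2\mathcal H^k(M,g_{\foc})\cong\Im\big(W\!H^k(M,g_{\foc},\epsilon_0)\to W\!H^k(M,g_{\foc},-\epsilon_0)\big)$ (Theorem~\ref{wtl.7}); goodness enters here, but only on the collar, to obtain the parametrix for $D_{\foc}$ by lifting to $\wt Y\times[0,\epsilon)$ and averaging. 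Composing the two and reading off $[\pm\epsilon_0+f/2]$ as the two middle perversities gives Theorem~\ref{thm:main2}. Your closing fallback --- performing the collar analysis directly on $M$ --- is essentially this route, but it is not an alternative: under the stated hypotheses it is the argument.
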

As in \cite{HHM2004}, when $X$ is a Witt space (again, see Section \ref{sec:proofs}), the result simplifies, since $I\!H^k_{\underline{\mathfrak{m}}}(X,B) = I\!H^k_{\overline{\mathfrak{m}}}(X,B)$, and the proof involves softer analytical methods.  In particular, it does not requires the Seifert fibration $\cF$ to be good.  This gives the following.    
\begin{theorem}
Let $\ov{M}$ be a manifold with boundary, $\p M = \ov{M} - M$.  Let $\cF$ be a Seifert fibration on $\pa M$, and let $g_{\foc}$ be a foliated cusp metric
  on $M$.  If the space $X$ described above is Witt, then there are natural isomorphisms   
\[
  L^2\mathcal{H}^*(M,g_{\foc})\cong H^*_{(2)}(M,g_{\foc}) \cong I\!H^*_{\underline{\mathfrak{m}}}(X,B),
\]
where $H^*_{(2)}(M,g_{\foc})$ is the $L^2$-cohomology of $(M,g_{\foc})$.
\label{Witt.1}\end{theorem}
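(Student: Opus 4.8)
The plan is to deduce this Witt-space statement from Theorem \ref{thm:main2} together with the standard structure theory of $L^2$-cohomology on complete manifolds, while checking that the weaker hypothesis (dropping ``good'') still suffices. First I would record that for a foliated cusp metric $g_{\foc}$ the underlying complete manifold $(M,g_{\foc})$ has a Hodge-theoretic dichotomy: either the image of $L^2\mathcal{H}^k$ in reduced $L^2$-cohomology is all of $H^k_{(2)}$, or reduced and unreduced $L^2$-cohomology differ. So the two isomorphisms in the statement are really two separate assertions: (i) $L^2\mathcal{H}^*(M,g_{\foc}) \cong H^*_{(2)}(M,g_{\foc})$, i.e.\ the $L^2$-cohomology is already reduced (the range of $d$ is closed), and (ii) the identification of this common group with $I\!H^*_{\underline{\mathfrak{m}}}(X,B)$.

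For (ii), observe that on a Witt space the lower-middle and upper-middle perversities give the same intersection cohomology, $I\!H^k_{\underline{\mathfrak{m}}}(X,B) = I\!H^k_{\overline{\mathfrak{m}}}(X,B)$ — this is the Witt condition exactly as in \cite{HHM2004} and follows because at each odd-codimension stratum the relevant link cohomology in middle degree vanishes, so the natural map in Theorem \ref{thm:main2} between $I\!H_{\underline{\mathfrak{m}}}$ and $I\!H_{\overline{\mathfrak{m}}}$ is forced to be an isomorphism and its image is the whole group. Then Theorem \ref{thm:main2}, if it applies, immediately gives $L^2\mathcal{H}^k(M,g_{\foc}) \cong I\!H^k_{\underline{\mathfrak{m}}}(X,B)$. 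The subtlety flagged in the excerpt is that Theorem \ref{thm:main2} assumes $\cF$ is \emph{good}, whereas here we only assume $\cF$ is a Seifert fibration; so I would instead argue that in the Witt case the ``good'' hypothesis — which by the discussion before the theorem is used only in Section \ref{wtl.0} to analyze the failure of the middle-degree $L^2$-cohomology to be reduced — becomes unnecessary, because the Witt condition makes that failure vanish a priori. Concretely, near $\p M$ the metric is a warped product over the Seifert fibration, and the leafwise Hodge Laplacian has, in each fibre degree, a spectral gap at $0$ away from the middle degree; the Witt condition supplies the gap in the middle degree as well on the pieces of odd codimension, so a model/normal-operator computation (as in Section \ref{wtl.0}, now done with the orbifold base directly rather than via a resolution) shows $0$ is not in the continuous spectrum contributing to non-closed range, giving (i) and simultaneously making the full analytic machinery apply without the ``good'' assumption.

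The cleanest route for the $L^2$-cohomology side is therefore: (a) establish, via the b-/$\phi$-type pseudodifferential analysis already set up in the paper (Mazzeo–Melrose-style), that under the Witt condition the Gauss–Bonnet operator $d + \delta$ associated to $g_{\foc}$ is essentially self-adjoint with closed range in every degree, using that the indicial/normal operator at the boundary foliation has trivial kernel of the offending type — here the orbifold nature of $B$ causes no problem because the relevant local models are quotients of the smooth fibred-cusp models by finite groups and the estimates descend to invariant forms; (b) conclude $H^k_{(2)}(M,g_{\foc}) = L^2\mathcal{H}^k(M,g_{\foc})$; (c) invoke the (now hypothesis-weakened) conclusion of Theorem \ref{thm:main2} in the form $L^2\mathcal{H}^k \cong \operatorname{Im}(I\!H^k_{\underline{\mathfrak{m}}} \to I\!H^k_{\overline{\mathfrak{m}}}) = I\!H^k_{\underline{\mathfrak{m}}}(X,B)$; and (d) check naturality of all maps under the collapsing map $c_\pi$, which is routine since each isomorphism is induced by the identity on $M$ and a de Rham/Čech comparison on $X - B$. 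The main obstacle I expect is step (a)/the removal of ``good'': one must verify that the orbifold singularities of $B$ do not introduce new continuous spectrum at $0$, i.e.\ that the harmonic analysis of the leafwise Laplacian on a Seifert fibration with orbifold base still yields the needed spectral gaps — this is where the Witt condition must be used as a substitute for the resolution argument of Section \ref{wtl.0}, and getting the model computation exactly right at the orbifold strata is the crux.
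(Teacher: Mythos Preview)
Your route goes through Theorem~\ref{thm:main2} and then tries to strip the ``good'' hypothesis by redoing the parametrix/spectral analysis of Section~\ref{wtl.0} directly over an orbifold base, arguing that the Witt condition furnishes the missing spectral gap. That step~(a) is exactly the analysis the paper says it does \emph{not} know how to do in general (see the introduction: the good hypothesis ``is needed only in Section~\ref{wtl.0}, and we hope to remove it in a future work''). You may well be right that Witt makes it tractable, but you have not actually carried out the model computation at the orbifold strata, and there is no reason offered why the indicial/normal operator theory for foliated cusp operators should go through without a global resolution of the base. As written this is a hope, not a proof.

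More to the point, the paper avoids this difficulty entirely by a much softer argument that you have overlooked. Theorem~\ref{thm:wcihfol} --- the identification $W\!H^*(g_{\foc},a,M)\cong I\!H^*_{[a+f/2]}(X,B)$ --- is purely sheaf-theoretic and does \emph{not} require $\cF$ to be good; only the Hodge-theoretic step in Section~\ref{wtl.0} does. The Witt condition on $X$ is equivalent to $f$ odd or $H^{f/2}(\tF)=0$, which is precisely what is needed for the hypothesis of Theorem~\ref{thm:wcihfol} to hold at $a=0$. Since $W\!H^*(g_{\foc},0,M)$ is by definition the $L^2$-cohomology $H^*_{(2)}(M,g_{\foc})$, this gives $H^*_{(2)}(M,g_{\foc})\cong I\!H^*_{\underline{\mathfrak m}}(X,B)$ immediately. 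Now the right-hand side is finite dimensional, so Corollary~\ref{dclosed.1} gives closed range of $d$ for free, and the Kodaira decomposition then identifies $L^2$-cohomology with $L^2\mathcal{H}^*$. No pseudodifferential analysis, no normal operators, no goodness --- and in particular no need to invoke Theorem~\ref{thm:main2} at all.
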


When the boundary foliation is induced by a fibration, 
these theorems reduce to the results of \cite{HHM2004}.  Moreover, Theorems \ref{thm:main1} and \ref{thm:main2}
simplify if the typical leaf is a quotient of a
sphere by the free, properly discontinuous action of a finite group, for
then $X$ itself is an orbifold.  Since the intersection cohomology of an orbifold is isomorphic to the singular cohomology for any standard perversity (a well-known fact, see Theorem~\ref{ico.1} below), this leads to the following analogues of
Corollaries 1 and 2 from \cite{HHM2004}.

\begin{corollary}\label{thm:fbsphere}
 Let $(M,g_{\fob})$ satisfy the assumptions of Theorem \ref{thm:main1}
 and assume that the
  typical leaf is the quotient of a sphere by the free, properly discontinuous action of a finite group.  Then for any degree 
$0 \leq k \leq n$, there are natural isomorphisms
\begin{eqnarray*}
L^2\mathcal{H}^k(M,g_{\fob})
\cong \left\{ \begin{array}{ll}
H^k(X,B) & k \leq \frac{b-1}{2} \\
\mbox{\rm Im}\, \big(H^k(X,B) \longrightarrow H^k(X) \big)
\quad & k =
\frac{b-1}{2} +1 \\
H^k(X) & \frac{b+1}{2} <k < n-\frac{b+1}{2} \\
\mbox{\rm Im}\,\big(H^k(X) \longrightarrow H^k(X \setminus B)) & k = 
n-\frac{b+1}{2} \\
H^k(X \setminus B) & k\geq n-\frac{b-1}{2}
\end{array} \right.
\label{sphereodd}
\end{eqnarray*}
if $b$ is odd, and
\begin{eqnarray*}
L^2\mathcal{H}^k(M,g_{\fob}) 
\cong \left\{ \begin{array}{ll}
H^k(X, B) & k \leq \frac{b}{2} \\
H^k(X) & \frac{b}{2} <k < n-\frac{b}{2} \\
H^k(X \setminus B) & k\geq n-\frac{b}{2}
\end{array} \right.
\label{sphereeven}
\end{eqnarray*}
if $b$ is even.
\end{corollary}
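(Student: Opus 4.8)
The plan is to specialize Theorem~\ref{thm:main1} to the case where the typical leaf $F$ is a quotient $\bbS^{f}/\Gamma$ by a free, properly discontinuous action of a finite group $\Gamma$. The first step is to observe that under this hypothesis the stratified space $X$ is itself an orbifold: locally near $B$, the mapping cylinder of $\pi \colon \pa M \to B$ looks like a bundle of cones $c(\bbS^{f}/\Gamma)$ over $B$, and since $\bbS^{f}/\Gamma$ is itself a spherical space form, the cone $c(\bbS^{f}/\Gamma) = \bbR^{f+1}/\Gamma$ is an orbifold chart. Thus $X$ carries an orbifold structure compatible with its stratification, and by Theorem~\ref{ico.1} (intersection cohomology of an orbifold agrees with singular cohomology for any standard perversity), we have $I\!H^{k}_{\pp}(X) \cong H^{k}(X)$ for every standard perversity $\pp$. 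Care is needed here because the perversities $f + \frac{b\pm 1}{2} - k$ appearing in Theorem~\ref{thm:main1} need not be standard, nor even nonnegative, so I would split into cases according to the value of $j := f + \tfrac{b-1}{2} - k$ (odd case) relative to the ranges $j \le -1$, $0 \le j \le f-1$, $j \ge f$ dictating which branch of \eqref{eq:inthom1} defines $I\!H^{k}_{j}(X,B)$.

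Concretely, for $b$ odd I would compute, degree by degree, the image of the map $I\!H^k_{f+\frac{b+1}{2} -k}(X,B) \to I\!H^k_{f+\frac{b-1}{2} - k}(X,B)$ from \eqref{eq:inthom1}. When $k \le \tfrac{b-1}{2}$ both source and target indices exceed $f$, so both groups are $H^{k}(X,B)$ and the map is the identity, giving $H^{k}(X,B)$. When $k = \tfrac{b-1}{2}+1$ the source index is $f$ (giving $H^{k}(X,B)$) while the target index is $f-1$ (an honest intersection cohomology group, hence $H^{k}(X)$ by the orbifold identification), and the map becomes the natural map $H^{k}(X,B) \to H^{k}(X)$. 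For $\tfrac{b+1}{2} < k < n - \tfrac{b+1}{2}$ both indices lie in $\{0,\dots,f-1\}$, both groups are $H^{k}(X)$, and the map is an isomorphism. The remaining two cases $k = n - \tfrac{b+1}{2}$ and $k \ge n - \tfrac{b-1}{2}$ are handled symmetrically: the relevant indices drop to $-1$ or below, so one or both groups become $H^{k}(X - B)$, and the map is $H^{k}(X) \to H^{k}(X-B)$ respectively. For $b$ even the argument is even shorter, since Theorem~\ref{thm:main1} already gives a single group $I\!H^k_{f+\frac{b}{2}-k}(X,B)$; sorting the index $f + \tfrac b2 - k$ into the three ranges of \eqref{eq:inthom1} and applying the orbifold identification yields $H^{k}(X,B)$, $H^{k}(X)$, or $H^{k}(X-B)$ according to whether $k \le \tfrac b2$, $\tfrac b2 < k < n - \tfrac b2$, or $k \ge n - \tfrac b2$.

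The step I expect to require the most care is the verification that the orbifold structure on $X$ really does make Theorem~\ref{ico.1} applicable with the correct \emph{naturality}: one must check that the isomorphism $I\!H^{k}_{\pp}(X) \cong H^{k}(X)$ and the analogous identifications of $H^{k}(X,B)$ and $H^{k}(X-B)$ fit together compatibly with the natural maps between intersection cohomology groups of differing perversities, so that the composite isomorphisms of Theorem~\ref{thm:main1} transport to the maps $H^{k}(X,B)\to H^{k}(X)$ and $H^{k}(X)\to H^{k}(X-B)$ claimed above. This amounts to checking that the sheaf-theoretic comparison maps underlying Theorem~\ref{ico.1} are functorial in the perversity, which is standard but should be stated. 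Everything else is bookkeeping with the boundary-value index $j$ and the inequalities $\pp(\ell)\le\pp(\ell+1)\le\pp(\ell)+1$.
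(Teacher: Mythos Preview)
Your proposal is correct and follows essentially the same route as the paper: observe that when the typical leaf is a spherical space form the space $X$ is an orbifold, invoke Theorem~\ref{ico.1} to identify $I\!H^k_{\pp}(X)$ with $H^k(X)$ for standard perversities, and then sort the index $j = f + \tfrac{b\pm1}{2} - k$ (or $f+\tfrac{b}{2}-k$) into the three ranges of \eqref{eq:inthom1}. The paper only sketches this in the introduction and leaves the case-by-case bookkeeping implicit (referring to the analogous Corollaries~1 and~2 of \cite{HHM2004}); your write-up supplies exactly those details, including the naturality check, which the paper does not spell out.
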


\begin{corollary}\label{thm:fcsphere}
Let $(M,g_{\foc})$ satisfy the assumptions of Theorem \ref{Witt.1}
 and assume that the
  typical leaf is the quotient of a sphere by the free properly discontinuous action of a finite group. Then for any degree 
$0 \leq k \leq n$, 
\[
L^2\mathcal{H}^k(M,g_{\foc}) \cong H^*_{(2)}(M,g_{\foc}) \cong H^k(X).
\]
\end{corollary}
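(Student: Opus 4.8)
The plan is to deduce Corollary~\ref{thm:fcsphere} directly from Theorem~\ref{Witt.1} together with the hypothesis that the typical leaf is a quotient of a sphere by a free, properly discontinuous action of a finite group. First I would observe that under this hypothesis the stratified space $X$ obtained by collapsing the leaves of $\cF$ is in fact an orbifold: near a point of $B$, the space $X$ looks like a cone on the leaf, and when the leaf is $S^{f}/\Gamma$ with $\Gamma$ acting freely, the open cone $C(S^{f}/\Gamma)$ is just $\bbR^{f+1}/\Gamma$, so every point of $X$ has a neighbourhood modelled on a quotient of Euclidean space by a finite group. In particular $X$ has no strata of odd codimension causing a failure of the Witt condition — more precisely, the links are rational homology spheres (indeed quotients of spheres), so $X$ is automatically a Witt space, and Theorem~\ref{Witt.1} applies without further hypotheses.

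Next I would invoke the general fact, recorded as Theorem~\ref{ico.1} in the paper, that for an orbifold the intersection cohomology $I\!H^{k}_{\pp}(X)$ with any \emph{standard} perversity $\pp$ coincides with the ordinary singular cohomology $H^{k}(X)$; the lower middle perversity $\underline{\mathfrak{m}}$ is standard, so $I\!H^{k}_{\underline{\mathfrak{m}}}(X,B) = I\!H^{k}_{\underline{\mathfrak{m}}}(X) \cong H^{k}(X)$. (Here I should check that the notation $I\!H^{k}_{j}(X,B)$ of \eqref{eq:inthom1} reduces to $I\!H^{k}_{\pp}(X)$ for the relevant value of $j = f + \tfrac{b}{2} - k$ or the middle value, i.e.\ that this $j$ lies in the range $0 \le j \le f-1$ where the ``$(X,B)$'' refinement is genuinely the perversity-$\pp$ group and not one of the degenerate cases $H^{k}(X-B)$ or $H^{k}(X,B)$; for $b$ small relative to $f$ and $k$ in the middle range this is automatic, and the boundary cases are exactly where the orbifold identification $H^{k}(X,B)\cong H^{k}(X)\cong H^{k}(X-B)$ kicks in anyway, so the statement is uniform.) Combining, the map $I\!H^{k}_{\underline{\mathfrak{m}}}(X,B) \to I\!H^{k}_{\overline{\mathfrak{m}}}(X,B)$ appearing in Theorem~\ref{Witt.1} — or rather the already-collapsed isomorphism in the Witt case — becomes the identity on $H^{k}(X)$, and we read off
\[
  L^2\mathcal{H}^k(M,g_{\foc}) \cong H^*_{(2)}(M,g_{\foc}) \cong I\!H^k_{\underline{\mathfrak{m}}}(X,B) \cong H^k(X),
\]
which is the claim.

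The only real content beyond citing Theorem~\ref{Witt.1} is therefore the two structural observations: that a sphere-quotient leaf forces $X$ to be an orbifold, hence Witt, and that intersection cohomology of an orbifold with a standard perversity is singular cohomology. I expect the main obstacle — really the only place where care is needed — to be checking that the cone on $S^{f}/\Gamma$ is genuinely an orbifold chart, i.e.\ that the $\Gamma$-action extends over the cone point so that $C(S^{f}/\Gamma) \cong \bbR^{f+1}/\Gamma$ as stratified spaces, and that this local model is compatible with the global orbifold structure of $B$ inside $X$ (one must match the isotropy groups of $B$ as an orbifold with the deck groups of the collapsed leaves). This is essentially the same bookkeeping already carried out in Section~\ref{sfs.0} for the stratification of $X$, so it can be quoted rather than redone. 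Once $X$ is known to be an orbifold everything else is a formal consequence, exactly as in the derivation of Corollaries~1 and~2 of \cite{HHM2004}.
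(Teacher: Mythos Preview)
Your proposal is correct and matches the paper's approach (stated in the paragraph preceding the corollaries): when the typical leaf is a sphere quotient, $X$ is an orbifold, so Theorem~\ref{ico.1} identifies $I\!H^k_{\underline{\mathfrak{m}}}(X)$ with $H^k(X)$, and combining with Theorem~\ref{Witt.1} gives the result. Your extra verification that $X$ is Witt is redundant here since it is already among the hypotheses of Theorem~\ref{Witt.1}, and the parenthetical about the range of $j$ in \eqref{eq:inthom1} is unnecessary because Theorem~\ref{Witt.1} invokes the middle perversity directly rather than the indexed notation.
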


The structure of the present paper is closely related to that of
\cite{HHM2004}.  There, the authors begin by identifying the
intersection cohomology groups with the cohomology of a complex of
weighted differential forms.  They then equate the Hodge cohomology with the images of
maps of certain weighted cohomology groups.  Combining these two
results gives Theorems \ref{thm:main1} and \ref{thm:main2} in the
fibration case.  The lion's share of the work, and all of the
analysis, is in the second step.  Here, the reverse is true; most of
the work lies in identifying intersection
cohomology with weighted cohomology, whereas the identification of
Hodge cohomology with (images of maps of)
weighted cohomology follows directly from the arguments in \cite{HHM2004},
which we use essentially as a black box.

\begin{acknowledgements}
We are grateful to Markus Banagl and Eugénie Hunsicker for some helpful correspondence. 
\end{acknowledgements}

\section{Seifert fibrations and stratified spaces}\label{sfs.0}

As in \cite{Goette2011}, by ``Seifert fibration'', we mean the natural generalization of Seifert
fibrations on 3-manifolds to arbitrary dimensions.  A Seifert
fibration on a compact manifold $M$ is, simply put, a Riemannian foliation $\cF$ with compact leaves.  Since the space of leaves has naturally the structure of an effective orbifold, see for instance \cite{Goette2011} or \cite{Molino1988},    
there is an alternative definition involving orbibundles, which
we describe now
following \cite{Goette2011}.  First, we recall the definition of an orbifold (see for instance
\cite{Thurston}).  

\begin{definition}
An $n$-dimensional smooth \textbf{orbifold} is a second countable Hausdorff space $B$ such that:  
\begin{enumerate}
\item[(i)]  $B$ has a covering by a collection of open sets $\{\cU_i\}$ closed under finite intersections.  For each $\cU_i$, there is a finite group $\Gamma_i$ with a smooth action (on the right) on an open subset $\tU_i\subset \bbR^n$ and a homeomorphism $\varphi_i\colon \tU_i/\Gamma_i\to \cU_i$. 
\item[(ii)]  Whenever $\cU_i\subset \cU_j$, there is an injective
  homomorphism $\nu_{ij}\colon \Gamma_i\to\Gamma_j$ and an embedding
  $\widetilde{\varphi}_{ij}\colon\tU_i\to \tU_j$ equivariant with respect
  to $\nu_{ij}$ and making the following diagram commute,
\[
\xymatrix{
      \tU_i \ar[d]^{\widetilde{\varphi}_{ij}} \ar[r] &  \tU_i/\Gamma_i  \ar[d]^{\varphi_{ij}=\widetilde{\varphi}_{ij}/\Gamma_i  }  & &  \ar[ll]  U_i  \ar@{^{(}->}[d]  \\
      \tU_j \ar[r]  & \tU_j /\Gamma_i \ar[r]^{\nu_{ij}} & \tU_j/ \Gamma_j & \ar[l]_{\varphi_j} \cU_j.
}
\]
The quadruple $(\cU_i,\tU_i,\Gamma_i,\varphi_i)$ is called an \textbf{orbifold chart} and the collection of orbifold charts is called an \textbf{orbifold atlas}.  
\end{enumerate} 
\label{sfs.1}\end{definition}     
We say an orbifold is \textbf{effective} if, for all orbifold charts
$(\cU_i,\tU_i,\Gamma_i,\varphi_i)$, the action of $\Gamma_i$ on
$\tU_i$ is effective, i.e.\ the identity of $\Gamma_{i}$ is the only
element which acts as the identity map.   On an orbifold, we can consider bundles which are compatible with the orbifold structure.

\begin{definition}
Let $B$ be an orbifold and $\tF$ a smooth manifold.  Then a smooth
\textbf{orbibundle} with fibre $\tF$ is an orbifold $Y$ together with
a surjective continuous map $\pi\colon Y\to B$ satisfying the following:
\begin{enumerate}
\item[(i)]  $B$ admits an orbifold atlas of orbifold charts $(\cU_i,\tU_i,\Gamma_i,\varphi_i)$ such that for each $i$,  there is a smooth action of $\Gamma_i$ on $\tU_i\times \tF$ making the projection 
$\tU_i\times \tF\to \tU_i$ $\Gamma_{i}$-equivariant, and there is a homeomorphism 
$\psi_i\colon (\tU_i\times \tF)/\Gamma_i\to \pi^{-1}(\cU_i)$ inducing the following commutative diagram,
\[
\xymatrix{
  \tU_i\times \tF \ar[d]\ar[r] & (\tU_i\times \tF)/\Gamma_i \ar[r]^{\psi_i}\ar[d] & \pi^{-1}(\cU_i) \ar[d] \\
  \tU_i \ar[r] & \tU_i/\Gamma_i \ar[r]^{\varphi_i} & \cU_{i}. 
}
\]
\item[(ii)]  Whenever $\cU_i\subset \cU_j$, there exists a $\Gamma_i$-equivariant embedding 
$\widetilde{\psi}_{ij}\colon \tU_i\times \tF\to \tU_j\times \tF$  making the following diagram commutes,
\[
\xymatrix{
\tU_i \ar[d]^{\widetilde{\varphi}_{ij}} & \ar[l] \tU_i\times \tF \ar[d]^{\widetilde{\psi}_{ij}} \ar[r] & (\tU_i\times \tF)/\Gamma_i \ar[rr]^{\psi_i} \ar[d]^{\psi_{ij}=\widetilde{\psi}_{ij}/\Gamma_i} & & \pi^{-1}(\cU_i) \ar@{^{(}->}[d] \\
\tU_j & \ar[l] \tU_j\times \tF \ar[r] &(\tU_j\times \tF)/\Gamma_i\ar[r] & (\tU_j\times \tF)/\Gamma_j \ar[r]^{\psi_j} & \pi^{-1}(\cU_j).
}
\]
\end{enumerate}
In particular, the collection $(\pi^{-1}(\cU_i), \tU_i\times \tF, \Gamma_i, \psi_i)$ form an orbifold atlas for $Y$.  
\label{sfs.2}\end{definition}
\begin{remark} 
 Without loss of generality, we can assume that in the orbifold chart 
 $(\pi^{-1}(\cU_i), \tU_i\times \tF, \Gamma_i, \psi_i)$,  the action of $\Gamma_i$ on $\tU_i\times \tF$ is induced by the action on $\tU_i$ and a fixed action on $\tF$,
 \[
      (u,f)\cdot \gamma= (u\cdot \gamma, f\cdot \gamma), \quad u\in\tU_i, \ f\in \tF, \ \gamma\in \Gamma_i.
\]
Indeed, if the action is not already of this form, one can use a $\Gamma_i$-equivariant connection for the fibration $\tU_i\times \tF\to \tU_i$ and parallel transport to identify each fibre above $\tU_i$ with a fixed one.  With such an identification, the
action of $\Gamma_i$ is of the desired form. 
\label{sfs.5}\end{remark}
We can now define Seifert fibrations in terms of orbibundles.
\begin{definition}
A \textbf{Seifert fibration} is an orbibundle for which the total
space $Y$ is a smooth manifold, that is, such that the $\Gamma_i$ actions on $\tU_i\times \tF$ are free for each $i$.  
\label{sfs.4}\end{definition}

\begin{remark}
As explained in \cite[Remark~1.3]{Goette2011}, there is no loss of
generality in assuming that the
space of leaves of a Seifert fibration is an effective orbifold.  For
this reason, we will assume that all the orbifolds considered are effective unless otherwise specified.  
\label{sfs.4b}\end{remark}

In Section \ref{wtl.0}, we will consider the following special case. 

\begin{definition}\label{thm:goodassumption}
A foliation $\mathcal{F}$ on $ Y$  is a \textbf{good Seifert fibration} if it arises as follows:
\begin{itemize}
\item[(i)] $Y= \widetilde{Y}/\Gamma$ where $\widetilde{Y}$
  is a smooth compact manifold on which a finite group $\Gamma$ acts on the right by diffeomorphisms freely and
properly discontinuously.  The manifold $\widetilde{Y}$ is the total space of a 
fibration 
\begin{equation}
\xymatrix{
                 \widetilde{F} \ar[r] & \widetilde{Y} \ar[d]^{\wt{\pi}} \\
                                              & \wt{B}
                                              }
\label{rf.1}\end{equation}
where the base $\wt{B}$ is a closed manifold and the fibre $\widetilde{F}$ is 
a smooth manifold.  The group $\Gamma$ acts smoothly on
$\wt{B}$ in such a way that 
\[
   \wt{\pi}(y\cdot\gamma) =  \wt{\pi}(y)\cdot\gamma,  \quad \forall \ y\in \widetilde{Y}, \ \gamma\in \Gamma.  
\]
\item[(ii)] The leaves of the foliation $\mathcal{F}$ are given by the images of the fibres of the fibration $\wt{\pi}\colon \widetilde{Y}\to \wt{B}$ under
the quotient map
\begin{equation}\label{eq:pMquotient}
q_{\widetilde{\pi}}\colon \widetilde{Y}\to Y.
\end{equation}
Thus the leaves of the foliation
are given by $q_{\widetilde{\pi}}( \wt{\pi}^{-1}(y))$ for $y\in \wt{B}$.
\end{itemize} 
\label{tfb.2b}\end{definition}

Let $\overline{M}$ be a smooth manifold with boundary $\pa M$ equipped with a Seifert fibration $\cF$.  Denote by $M$ the interior of $\overline{M}$.  Let
$B$ be the space of leaves and let 
$\pi\colon \pa M\to B$ be the associated projection.    Since $B$ is an
orbifold, we know (see for instance \cite[\S~4.4.10]{Pflaum2001}) that it has a natural induced
stratification  defined in terms of the isotropy types of the points on
$B$, which are defined as follows.  If
$(\cU, \tU,\Gamma,\varphi)$ is an orbifold chart for an open
neighborhood $\cU$ of a point $p\in B$, then the isotropy type of $p$,
denoted $[I_p]$, is the isomorphism class
of the isotropy group $I_{\widetilde{p}}$ of $\widetilde{p}$
under the action of $\Gamma$, where $\widetilde{p}$ is any
point in $\varphi^{-1}(p)$.  As can be readily checked, the
isotropy type depends neither on the choice of $\widetilde{p}$ nor on the
orbifold chart.  In terms of the isotropy type,
the stratification of $B$ is given by
\begin{equation}
      B=B_b\supset B_{b-1}\supset \cdots \supset B_1 \supset B_0, \quad \mbox{with}\, b= \dim B, 
\label{strat.1}\end{equation} 
where the stratum $B_i$ is the disjoint union of all the strata of
dimension $i$ of the form 
\[
   s_{[\Gamma]} = \overline{  \{p \in B  : [I_p]= [\Gamma]   \}    },
\] 
where the closure is taken in $B$ and $[I_p]=[\Gamma]$ means that the group
$I_p$ is isomorphic to the group  $\Gamma$.  That \eqref{strat.1} is a well-defined stratification can be
seen locally in an orbifold chart by using the corresponding result
about the stratification of manifolds admitting a smooth group action,
see for instance \cite{DK2000}.  Recall that the depth
$\delta(s_{[\Gamma]})$ of a stratum $s_{[\Gamma]}$ of $B$ is the biggest
integer $k$ such that there exists different strata
$s_0,\ldots,s_{k-1}$ such that 
\[
   s_0 \subsetneqq s_1 \subsetneqq \cdots \subsetneqq s_{k-1} \subsetneqq s_k := s_{[\Gamma]}.
\]
The depth of $B$ is 
\[
  \delta(B)= \sup \{ \delta(s_{[\Gamma]}) : s_{[\Gamma]}\; \mbox{is a stratum of}\ B    \}.
\]
Let $X$ be the singular space given in \eqref{eq:X}.  Then $X$ is
naturally a stratified space with stratification 
\[
   X_n\supset X_{n-1}\supset X_{n-2}\supset \cdots \supset X_1 \supset X_0
\] 
induced by of $B$, i.e.\
\[
   X_j := \left\{ \begin{array}{ll}  X, & j=n,  \\
                                                      B, &  b\le j < n, \\
                                                      B_j, & j< b.                            
                        \end{array}
    \right.
\]
With this stratification, the depth of $X$ is one more than that of
$B$, i.e.\ $\delta(X)=\delta(B)+1$.  

We will now discuss regular neighborhoods for points $p \in B \subset
X$.  We will use the following standard notation for cones; given
$\epsilon > 0$ and a topological space $Z$, set $$C_{\epsilon}(Z) := Z
\times [0, \epsilon] / Z \times \set{0}.  $$

\begin{lemma}
Given $p\in B\subset X$, let $\Gamma$ represent the isotropy type
$[I_p]$ of $p$ in $B$.  Let $b - d$ be the smallest dimension of a
stratum
$s_{[\Gamma]}$ in $B$ containing $p$.  Then there exists a smooth
compact $\Gamma$-manifold $\tF$ and a smooth $\Gamma$ action on the
unit ball  $\bbB^{d}$ such that the $\Gamma$ action on
$\bbB^{d}\times \tF$ is free and $p$ has a regular neighborhood in $X$  of the
form
\begin{equation}\label{eq:regneighb}
     U= \bbB^{b-d} \times C_{1}(L),
\end{equation}
where $L= \tL/\Gamma$ and $\tL$ is the stratified space, with the obvious induced $\Gamma$ action, obtained from $\bbB^{d}\times \tF$
by collapsing the boundary fibration $\pa\bbB^{d}\times \tF\to \pa
\bbB^{d}$ onto its base.  If $d = 0$, then $\wt{L} = \wt{F}$.  See
Figure \ref{fig:widetildeL}.  Furthermore, 
\begin{equation}
  \label{eq:deeper}
  \delta(L) < \delta(X).
\end{equation}
\label{regn.1}\end{lemma}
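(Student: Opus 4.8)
The plan is to construct the regular neighborhood $U$ of $p$ in $X$ directly from an orbifold chart for $B$ around $p$, pushing forward the local product structure of the Seifert fibration. First I would choose an orbifold chart $(\cU, \wt{\cU}, \Gamma, \varphi)$ for $B$ centered at $p$, where $\Gamma$ represents the isotropy type $[I_p]$ and where, by the slice theorem for the smooth $\Gamma$-action on $\wt{\cU}\subset\bbR^b$ (see \cite{DK2000}), we can arrange $\wt{\cU}$ to be $\Gamma$-equivariantly diffeomorphic to $\bbB^{b-d}\times\bbB^d$, with $\Gamma$ acting trivially on the first factor $\bbB^{b-d}$ (the ``flat'' directions along the minimal-dimensional stratum $S_{[\Gamma]}$ through $p$) and linearly, with only the identity acting trivially, on the normal ball $\bbB^d$. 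Over this chart, the Seifert fibration restricts to an orbibundle $\pi^{-1}(\cU)\to\cU$; by Definition \ref{sfs.2} and Remark \ref{sfs.5} we may write $\pi^{-1}(\cU)\cong(\wt{\cU}\times\wt F)/\Gamma = (\bbB^{b-d}\times\bbB^d\times\wt F)/\Gamma$ for a smooth compact $\Gamma$-manifold $\wt F$ (a typical leaf's resolution), where $\Gamma$ acts diagonally via its action on $\bbB^d$ and a fixed action on $\wt F$. Since the total space $\p M$ is a genuine manifold, Definition \ref{sfs.4} tells us the $\Gamma$-action on $\wt{\cU}\times\wt F$ is free; as it is trivial on the $\bbB^{b-d}$ factor, the action on $\bbB^d\times\wt F$ must already be free, which gives the freeness claim in the statement.

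Next I would build the cone structure in the collapsed space $X$. Near $\p M$, $\ov M$ looks like $\p M\times[0,\epsilon)_x$, and collapsing leaves means collapsing, over each point of $\cU$, the leaf $q_{\wt\pi}(\wt\pi^{-1}(y))$ together with the $x$-interval; concretely, over the chart, the relevant piece of $\ov M$ is $(\bbB^{b-d}\times\bbB^d\times\wt F)/\Gamma \times [0,\epsilon)_x$ and the equivalence relation identifies points with the same image in $\bbB^{b-d}\times\bbB^d$ when $x=0$. I would recognize $\bbB^d\times\wt F\times[0,\epsilon)_x$ with the boundary fibre $\{x=0\}$-collapsed-to-$\bbB^d$ as (a neighborhood of the cone point in) the open cone $C_1(\wt L)$, where $\wt L$ is exactly the space obtained from $\p(\bbB^d)\times\wt F$ (the ``boundary sphere bundle'') by collapsing the fibration $\p\bbB^d\times\wt F\to\p\bbB^d$ to its base — here one uses that a half-open collar on the boundary of $\bbB^d\times\wt F$, with the appropriate collapse at the inner end, is a cone on the boundary. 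Rescaling the cone parameter to $[0,1]$ and taking the $\Gamma$-quotient (which commutes with all these constructions since $\Gamma$ acts trivially on $\bbB^{b-d}$ and on the cone/collar direction) yields $U \cong \bbB^{b-d}\times C_1(L)$ with $L = \wt L/\Gamma$, as claimed; the degenerate case $d=0$ has $\bbB^d$ a point, no boundary to collapse, and $\wt L = \wt F$. I would then check that $U$ is a \emph{regular} neighborhood in the stratified sense — that the product-of-cone decomposition \eqref{eq:regneighb} is compatible with the stratifications of $X$ and of $L$ — which follows because the stratification of $B$ near $p$ is, by construction in \eqref{strat.1}, the pullback under $\varphi$ of the orbit-type stratification of $\bbB^{b-d}\times\bbB^d$, and the stratification of $X$ away from $B$ is trivial, so the cone directions only see the strata coming from $\bbB^d\times\wt F$ collapsed, i.e.\ those of $L$.

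Finally, for the depth inequality \eqref{eq:deeper}: the strata of $L$ correspond to the strata of $\wt L/\Gamma$, which in turn come from the orbit-type strata of the $\Gamma$-action on $\bbB^d$ (for the ``collapsed'' part $\cong B$-locally) together with the single top stratum (the uncollapsed part); a chain of strata $s_0\subsetneq\cdots\subsetneq s_{k}$ in $L$ therefore lifts to a chain of orbit-type strata in $\bbB^d\cap$(a neighborhood of $0$), which can be prepended by the stratum $\{p\}\times C_1(\text{cone pt})$ direction — more precisely, any such chain in $L$, regarded inside $X$ via $U\cong\bbB^{b-d}\times C_1(L)$, extends to a strictly longer chain by adjoining the stratum of $X$ containing $p$ itself at the bottom (the cone point $\{0\}\subset C_1(L)$ sits in a deeper stratum of $X$ than any stratum of $L\times\{t\}$ for $t>0$). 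Hence $\delta(L)\le \delta(X)-1<\delta(X)$. The main obstacle I expect is the bookkeeping in the second paragraph: getting the collapse of the boundary fibration on $\bbB^d\times\wt F$ to match, $\Gamma$-equivariantly, the collapse of leaves in $\ov M/\!\sim$ near $x=0$, and simultaneously identifying a half-open neighborhood of the collapsed end with an honest cone $C_1(\wt L)$ — essentially an identification of the ``blow-down'' of a collar with a cone, carried out compatibly with the stratified structure. Everything else (the slice theorem, freeness, the depth count) is standard once this normal form is in place.
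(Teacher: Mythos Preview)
Your approach is essentially the paper's: an orbifold chart plus the slice theorem gives $\tV\cong\bbB^{b-d}\times\bbB^d$ with $\Gamma$ trivial on the first factor, the orbibundle structure gives $(\bbB^{b-d}\times\bbB^d\times\tF)/\Gamma$ with the diagonal $\Gamma$-action free on $\bbB^d\times\tF$, and one then identifies the image in $X$ of a collar neighborhood with $\bbB^{b-d}\times C_1(L)$. Your depth argument is also the paper's idea, phrased via chains rather than by the explicit formula $\delta(L)=1+\delta(\pa\bbB^d/\Gamma)<1+\delta(B)=\delta(X)$.

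The one place your sketch goes wrong is precisely the step you flag as the obstacle. First a slip: your $\tL$ ``obtained from $\pa(\bbB^d)\times\tF$ by collapsing'' would be just $\pa\bbB^d$; the correct $\tL$ (as in the lemma statement) is $\bbB^d\times\tF$ with only its boundary fibration collapsed. More substantively, your collar argument does not produce a cone on $\tL$: a half-open collar $\pa\bbB^d\times\tF\times[0,\delta)$ with a collapse at one end gives either $C_\delta(\pa\bbB^d\times\tF)$ or $\pa\bbB^d\times C_\delta(\tF)$, neither of which is $C_1(\tL)$. The paper's device is instead to combine the normal ball with the collar variable: replace the box $\bbB^d\times[0,\epsilon)_x$ by the half-ball $\bbB^{d+1}_+=\{(y,x):|y|^2+x^2<1,\ x\ge0\}$, which is literally $C_1(\bbS^d_+)$ with $\bbS^d_+\cong\bbB^d$. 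Under this identification the locus $\{x=0\}$ becomes the cone on $\pa\bbS^d_+=\pa\bbB^d$, so collapsing the $\tF$-fibres there matches exactly the collapse defining $\tL$, and the full quotient becomes $C_1(\tL)$ on the nose, $\Gamma$-equivariantly. That half-ball identification is the missing ingredient in your second paragraph; once you insert it, your outline coincides with the paper's proof.
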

\begin{proof}
Let $V \subset
B$ be an open neighborhood of $p$ and $(V, \tV,\Gamma, \varphi)$ an associated orbifold chart.  Without loss of generality, we can assume that $V$ and the associated orbifold chart are chosen in such a way that there is a unique $\wt{p} \in \wt{V}$ such that $\varphi\circ q(\wt{p}) = p$, where 
  $q\colon \tV\to \tV/\Gamma$ is the quotient map.  This means the action of $\Gamma$ on $\tV$ fixes $\wt{p}$, so that the isotropy type of $p$ is precisely the isomorphism class of the group $\Gamma$. 
By taking $\tV$ even smaller if needed, we know by the tube theorem
for smooth Lie group actions (see for instance Theorem~2.4.1 in
\cite{DK2000}), that we can assume $\tV$ is a linear representation of
$\Gamma$.  Putting an invariant inner product on $\tV$, we thus see there is a
decomposition $$\tV= \tV^{\Gamma}\times \tV'$$ obtained by taking the
orthogonal projection on the subvector space
$\tV^{\Gamma}$ of points fixed by $\Gamma$.  Note that if $d = 0$ then
$\tV = \tV'$.
By Remark~\ref{sfs.5} and at the cost of taking $V$ even smaller, we
can assume the Seifert fibration on $\pa M$ restricted to $V$ lifts on
$\tV$ to a $\Gamma$-equivariant fibration,
$$
    \pr\colon \tV\times \tF \to \tV, 
$$    
where $\tF$ is a smooth compact $\Gamma$-manifold and $\pr$ is the
projection on the first factor.    The smooth action of
$\Gamma$ on the total space $\tV\times \tF$ is free and we thus have a
quotient map $q\colon \tV\times \tF\to W$ onto the total space $W\subset
\pa M$ of the Seifert fibration lying above $V\subset B$.

Let $\bbB^{\tV^{\Gamma}}$ be the open unit ball in $\tV^{\Gamma}$.
Let $\bbB^{\tV'\times \bbR_x}$ be the unit ball in $\tV'\times \bbR_x$
and let $\bbB^{\tV'\times \bbR_x}_+ =\bbB^{\tV'\times \bbR_x}\cap \{x\ge
0\}$ be the corresponding half-ball.  With $\Gamma$ acting trivially on
$[0,\epsilon)_x$ and assuming without loss of generality that
$\epsilon>1$, we see that 
$\bbB^{\tV^{\Gamma}}\times \bbB^{\tV'\times \bbR_x}_+$ is an open subset
of $\widetilde{V}\times [0,\epsilon)_x$ which is preserved by the action of
$\Gamma$.  Thus, it descends to an open subset in $W\times
[0,\epsilon)_x\subset \overline{M}$.  Under the collapsing map $c_{\pi}\colon \overline{M}\to
X$, this open subset  is mapped to a regular neighborhood $U$ of $p\in X$.  To see
this,  notice that under the identification
$\bbB^{\tV'\times \bbR_x}_+\cong C_1(\bbS^{\tV'\times \bbR_x}_+)$,  where
$\bbS^{\tV'\times \bbR_x}_+$ is the corresponding half-sphere,  the
action of $\Gamma$ on $C_1(\bbS^{\tV'\times \bbR_x}_+)$ is induced from
an action on $\bbS^{\tV'\times \bbR_x}_+$.  Similarly, the action of
$\Gamma$ on  $C_1(\bbS^{\tV'\times \bbR_x}_+ \times \tF)$ is induced from
a corresponding action on
$\bbS^{\tV'\times \bbR_x}_+\times \tF$.  This action descends to the
stratified space $\tL$ given by
\begin{equation}\label{eq:wtL}
  \tL=  \bbS^{\tV'\times \bbR_x}_+\times \tF/\sim,
\end{equation} 
where
\begin{equation*}
  (s_1,f_1)\sim (s_2,f_2) \ \Longleftrightarrow \  (s_1,f_1) = 
  (s_2,f_2) \; \mbox{or} \; s_1=s_2 \in \pa  \bbS^{\tV'\times \bbR_x}_+,
\end{equation*}
see Figure \ref{fig:widetildeL}.  If $L= \tL/\Gamma$ is the
corresponding stratified space obtained by taking the quotient with
respect to this action,  then the above identification shows there is
a natural homeomorphism 
$$
      U \cong  \widetilde{U}^{\Gamma}\times C_1(L).
$$   
If $d = 0$, then $\tV' = \set{0}$ and $\wt{L} = \wt{F}$.
Finally, to establish \eqref{eq:deeper}, note that if $d > 0$, then the depth of $L$ is given by
$$
   \delta(L)= 1+ \delta(\pa\bbS^{\tV'\times \bbR_x}_+/\Gamma) 
$$
and that the depth of $\pa\bbS^{\tV'\times \bbR_x}_+/\Gamma$ is strictly
less than the depth of $B$.  Thus
\eqref{eq:deeper} follows from $\delta(X) = \delta(B) + 1$ in this
case.  If $d = 0$, then
$L$ is smooth, so $\delta(L)=0<1=\delta(X)$ and \eqref{eq:deeper} follows again.  
\end{proof}

\begin{figure}
  \centering
  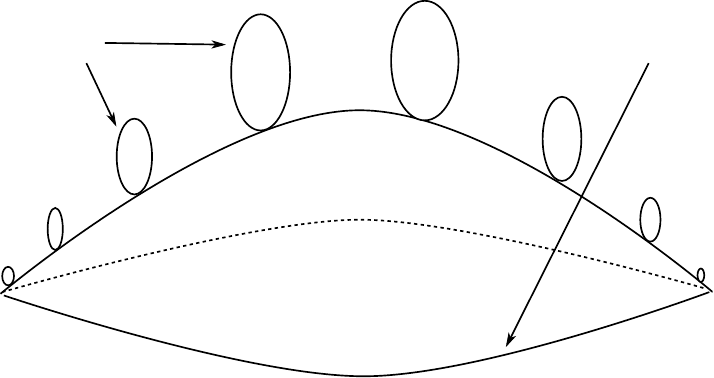
  \caption{the space $\wt{L}$ from \eqref{eq:wtL}, where
    $\bbS^{\tV'\times \bbR_x}_+$ is written simply as $\mathbb{B}^{d}$}
  \label{fig:widetildeL}
\end{figure}

\begin{definition}\label{thm:resolution}
  We will refer to the space $\widetilde{U} = \bbB^{b-d}
\times C_{1}(\wt{L})$ with its natural action of $\Gamma$ and quotient
map $q \colon \widetilde{U} \lra U$ as a \textbf{resolution} of $U$.  
\end{definition}

\section{From intersection cohomology to weighted cohomology}\label{sec:weighttoint}

We will now relate the intersection
cohomology of $X$ to the cohomology of a complex of sheaves of
weighted $L^{2}$ differential forms.  The proof will use sheaf theory and will be local on the
space of leaves of the boundary foliation.  For this reason,  we do not yet require that the Seifert fibration on the boundary be good.  The main ingredient reducing the discussion to local considerations is the following fundamental result from \cite[p.104]{GM1983}, \cf Proposition~1 in \cite{HHM2004}.  

\begin{proposition}[\cite{GM1983}]\label{thm:localtoglobal}
  Let $X$ be a stratified space, and let $(\mathcal{L}^{*}, d)$ be a
  complex of fine sheaves on $X$ with cohomology $H^{*}(X,
  \mathcal{L})$.  Suppose that if $U$ is a neighborhood in the principal
  (smooth) stratum of $X,$ then $H^{*}(U,
  \mathcal{L}) = H^{*}(U, \mathbb{C})$, while if $q$ lies in a stratum
  of codimension $\ell$ and $U = V \times C_{1}(L)$ as in \eqref{eq:regneighb}, then
  \begin{equation}
    \label{eq:localequiv}
    H^{*}(U, \mathcal{L}) \simeq I\!H^{k}_{\mathfrak{p}}(U) =
    \left\{
      \begin{array}{cc}
        I\!H^{k}_{\mathfrak{p}}(L), & k \leq \ell - 2 - \pp(\ell), \\
        0, & k \geq \ell - 1 - \pp(\ell).
      \end{array}
      \right.
  \end{equation}
Then there is a natural isomorphism between the hypercohomology
$\mathbb{H}^*{(X, \mathcal{L}^{*}})$ associated to this complex of sheaves
and $I\!H^*_{\mathfrak{p}}(X)$, the intersection cohomology with
perversity $\mathfrak{p}$.
\end{proposition}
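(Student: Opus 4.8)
The plan is to realize $(\mathcal{L}^{*},d)$, up to quasi-isomorphism in the bounded constructible derived category $D^{b}_{c}(X)$, as the Deligne sheaf complex computing $I\!H^{*}_{\pp}(X)$; taking hypercohomology then gives the statement. First, since each $\mathcal{L}^{i}$ is fine it is soft, hence $\Gamma$-acyclic on the paracompact space $X$ and on every open subset, so $\mathbb{H}^{*}(U,\mathcal{L}^{*})=H^{*}\!\big(\Gamma(U,\mathcal{L}^{*})\big)$ for all open $U\subseteq X$; this is what licenses writing $H^{*}(U,\mathcal{L})$ for hypercohomology in the hypotheses, and it shows that a quasi-isomorphism $\mathcal{L}^{*}\simeq\mathcal{IC}^{\bullet}_{\pp}$ induces a (natural, since canonical in $D^{b}_{c}(X)$) isomorphism $\mathbb{H}^{*}(X,\mathcal{L}^{*})\cong I\!H^{*}_{\pp}(X)$. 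Note also that the two local hypotheses force $\mathcal{L}^{*}$ to be cohomologically constructible and bounded. So the entire task is to produce the quasi-isomorphism.

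For that I would invoke the Goresky--MacPherson axiomatic characterization of the intersection cohomology sheaf complex (\cite{GM1983}; see also \cite{B1983}): a cohomologically constructible bounded complex on $X$ which is normalized on the principal stratum and satisfies the appropriate stalk/attaching vanishing conditions relative to $\pp$ is canonically isomorphic in $D^{b}_{c}(X)$ to $\mathcal{IC}^{\bullet}_{\pp}$. The first hypothesis, $H^{*}(U,\mathcal{L})=H^{*}(U,\mathbb{C})$ on the smooth stratum, is precisely the normalization axiom (for $U$ contractible it says $\mathcal{L}^{*}|_{X\setminus X_{n-2}}\simeq\mathbb{C}$). The content of the attaching axiom at a point $q$ in a stratum of codimension $\ell$ is this: the model neighborhoods $U=V\times C_{1}(L)$ of \eqref{eq:regneighb} form a cofinal system of distinguished neighborhoods of $q$, so $\mathbb{H}^{*}(U,\mathcal{L}^{*})$ computes the stalk $\mathcal{H}^{*}(\mathcal{L}^{*})_{q}$, while deleting from $U$ the codimension-$\ell$ stratum through $q$ leaves $V\times\big(C_{1}(L)\setminus\{*\}\big)$, which deformation retracts onto $L$, so (using constructibility, i.e.\ that $\mathcal{L}^{*}$ is pulled back from $L$ there up to quasi-isomorphism) $\mathbb{H}^{*}\!\big(U\setminus(\text{stratum of }q),\mathcal{L}^{*}\big)=H^{*}(L,\mathcal{L})$. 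The attaching axiom then demands exactly that $\mathcal{H}^{k}(\mathcal{L}^{*})_{q}$ equal $H^{k}(L,\mathcal{L})$ truncated at the appropriate degree and vanish above it; comparing with \eqref{eq:localequiv}, this is the statement that the truncation degree is $\ell-2-\pp(\ell)$ and that $H^{*}(L,\mathcal{L})=I\!H^{*}_{\pp}(L)$.

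These two ingredients are supplied by an induction on the depth $\delta(X)$. The base case $\delta(X)=0$ ($X$ smooth) is the normalization hypothesis. For the inductive step, \eqref{eq:deeper} gives $\delta(L)<\delta(X)$; the link $L$ with the pullback of $\mathcal{L}^{*}$ satisfies the same two hypotheses (the distinguished neighborhoods of points of $L$ are again of the model form, being links inside $L$), so by induction $\mathbb{H}^{*}(L,\mathcal{L}^{*})\cong I\!H^{*}_{\pp}(L)$, which is exactly what makes the right-hand side of \eqref{eq:localequiv} meaningful and completes the stalk computation above. Having verified the normalization and attaching axioms over every stratum, the characterization theorem yields $\mathcal{L}^{*}\simeq\mathcal{IC}^{\bullet}_{\pp}$ in $D^{b}_{c}(X)$, and passing to hypercohomology finishes the proof. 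The one genuinely delicate point — and the reason the hypotheses are phrased as they are — is matching normalizations: $I\!H^{*}_{\pp}$ as defined in the introduction is intersection \emph{cohomology} (cochains on $\pp$-admissible chains), so the Deligne complex in play is the one whose inductive truncation functors involve the complementary perversity value $\ell-2-\pp(\ell)$ rather than $\pp(\ell)$; this is standard but easy to get wrong, and with the correct normalization \eqref{eq:localequiv} is literally the axiomatic stalk condition.
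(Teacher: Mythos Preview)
The paper does not supply a proof of this proposition: it is quoted as a black box from \cite[p.~104]{GM1983} (and cross-referenced to Proposition~1 of \cite{HHM2004}). Your sketch is essentially the argument that underlies that result in the source: identify the two hypotheses with the Goresky--MacPherson--Deligne axioms for $\mathcal{IC}^{\bullet}_{\pp}$ (normalization over the top stratum, stalk vanishing above the truncation degree, attaching), then invoke the uniqueness of $\mathcal{IC}^{\bullet}_{\pp}$ in $D^{b}_{c}(X)$. Organizing the verification as an induction on depth via \eqref{eq:deeper} is natural in this paper's setting; in \cite{GM1983} the induction runs instead over the closed strata $X_{n-2}\supset X_{n-3}\supset\cdots$ of the fixed space $X$, but the content is the same.

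One point deserves care. Hypothesis \eqref{eq:localequiv} asserts only an \emph{abstract} isomorphism $H^{k}(U,\mathcal{L})\cong I\!H^{k}_{\pp}(L)$ in the range $k\le \ell-2-\pp(\ell)$, whereas the attaching axiom demands that the \emph{canonical} restriction map $H^{k}(U,\mathcal{L})\to H^{k}(U\setminus(\text{stratum of }q),\mathcal{L})$ be an isomorphism there. Your induction identifies the target with $I\!H^{k}_{\pp}(L)$, so both sides have the right dimension, but equality of dimensions does not by itself force the map to be an isomorphism. In \cite{GM1983} and \cite{B1983} this is circumvented by passing to the equivalent axiom set in which the attaching condition is replaced by a costalk (cosupport) vanishing condition, which \emph{is} a purely numerical constraint and follows from \eqref{eq:localequiv} together with the long exact sequence of the pair $(U,U\setminus(\text{stratum}))$. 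With that reformulation your argument goes through; it is a technical wrinkle in the source rather than a flaw in your strategy.
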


Before we continue, we take a moment to give the general definition of
foliated boundary and foliated cusp metrics, following \cite{pomfb}.
Foliated boundary
metrics are metrics on a vector bundle over the compact manifold with
boundary $\overline{M}$, defined as follows.  First, for a fixed boundary
defining function $x$ and a choice of foliation $\cF$ on $\pa\overline{M}$, we have the space of foliated cusp vector
fields
\begin{equation*}
  \label{eq:1}
  \mathcal{V}_{\mathcal{F}}(M) = \set{ \xi \in \Gamma(\mbox{T}\overline{M}) : \xi
    x = x^{2}C^{\infty}(\overline{M}) \mbox{ and } \xi \rvert_{\p M}
    \in \Gamma(\mbox{T}\mathcal{F}) }.
\end{equation*}
This set of vector fields over $\overline{M}$ is a Lie algebra under Lie bracket.  
It can be identified with the space of smooth sections of the vector bundle
${}^{\cF}T\overline{M}$,  whose fibre at each point $p \in \overline{M}$ is the set
$\mathcal{V}_{\mathcal{F}}(M) / \lp \mathcal{I}_{p}(\overline{M})
\mathcal{V}_{\mathcal{F}}(M)\rp$, where $\mathcal{I}_{p}(\overline{M})
\subset C^{\infty}(\overline{M})$ is the ideal of smooth functions
vanishing at $p$.  Then a \textbf{foliated boundary metric} is a
Riemannian metric $g_{\cF}$ on $M$ induced by the identification
$TM=\left.{}^{\cF}T\overline{M}\right|_{M}$ and a choice of a
Euclidean metric on the bundle ${}^{\cF}T\overline{M}$.  A
\textbf{foliated cusp metric} is a
Riemannian metric on $M$ of the form $g_{\mathcal{F}-c} = x^{2}
g_{\mathcal{F}}$ for some foliated boundary metric $g_{\mathcal{F}}$ on $M$.  When $\cF$ is a Seifert fibration, examples of such metrics are given by the exact foliated boundary and exact foliated cusp metrics described in the introduction.

Given an open set $U \subset X$, Let $L^{2}\Omega^{k}(g_{\foc}, U)$ denote the completion of
the space of compactly supported differential $k$-forms on $U - B$
with respect to the $L^{2}$-norm induced by $g_{\foc}$, and let
$x^{a}L^{2}\Omega^{k}(g_{\foc}, U)$ denote the space of currents
$\omega$ satisfying $x^{-a}\omega \in L^{2}\Omega^{k}(g_{\foc}, U)
$.   By considering the subspace 
$$
x^{a}L^{2}_d\Omega^{k}(g_{\foc}, U) := \{ \omega\in  x^{a}L^{2}\Omega^{k}(g_{\foc}, U) \: \
    d\omega\in x^{a}L^{2}\Omega^{k+1}(g_{\foc}, U)\},$$
we obtain a complex
\begin{equation}
  \label{eq:localweightedcomplex}
\cdots \xrightarrow{d} x^{a}L_d^{2}\Omega^{k}(g_{\foc}, U) \xrightarrow{d}
  x^{a}L_d^{2}\Omega^{k + 1}(g_{\foc}, U) \xrightarrow{d} \cdots.
\end{equation}
The corresponding cohomology groups are given by
\begin{equation}
  \label{eq:localweightedhomology}
  W\!H^{k}(g_{\foc}, a, U) = \frac{ \set{\omega \in
    x^{a}L^{2}\Omega^{k}(g_{\foc}, U) : d \omega = 0}}
    {\set{ d \zeta \in x^{a}L^{2}\Omega^{k}(g_{\foc}, U) : \zeta
      \in x^{a}L^{2}\Omega^{k - 1}(g_{\foc}, U)}}.
\end{equation}
For a foliated boundary metric, $g_{\fob}$, we consider instead the complex
\begin{equation}
  \label{eq:localweightedcomplexfob}
\cdots \xrightarrow{d} x^{a-1}L_d^{2}\Omega^{k-1}(g_{\fob}, U)\xrightarrow{d} x^{a}L^{2}_d\Omega^{k}(g_{\fob}, U) \xrightarrow{d}
  x^{a+1}L_d^{2}\Omega^{k + 1}(g_{\fob}, U) \xrightarrow{d} \cdots,
\end{equation}
where this time
$$
x^{a}L^{2}_d\Omega^{k}(g_{\fob}, U)= \{
\omega\in x^{a}L^{2}\Omega^{k}(g_{\fob}, U)\ : \  d\omega\in x^{a+1}L^{2}\Omega^{k + 1}(g_{\fob}, U)   \}.
$$
The corresponding cohomology groups are then denoted by
$$
\mathcal{W}\!H^{k}(g_{\fob}, a, U):=   \frac{ \set{\omega \in
    x^{a}L^{2}\Omega^{k}(g_{\fob}, U) : d \omega = 0}}
    {\set{ d \zeta \in x^{a}L^{2}\Omega^{k}(g_{\fob}, U) : \zeta
      \in x^{a-1}L^{2}\Omega^{k - 1}(g_{\fob}, U)}}.
$$
For a fixed boundary foliation $\cF$ and a fixed boundary defining function
$x\in \mathcal{C}^{\infty}(\overline{M})$, the space
$x^{a}L_d^{2}\Omega^{k}(g_{\fob}, U)$ and thus  the group
$\mathcal{W}\!H^{k}(g_{\fob}, a, U)$  do not depend on the choice of
the foliated boundary metric $g_{\fob}$.  Indeed, if $g_{\fob}$ and
$g_{\fob}'$ are two foliated boundary metrics,
then by the compactness of $\overline{M}$ there exists a constant $C>0$ such that 
\[
      \frac{g_{\fob}}{C} \le  g_{\fob}' \le C g_{\fob},
\]
so the weighted $L^{2}$ spaces, and thus the cohomologies, are the
same for either metric.
The same invariance holds for foliated cusp
metrics.  In particular, we may assume that $x^{2}g_{\fob} =
g_{\foc}$. It follows that
$x^{a}L^{2}\Omega^{k}(g_{\fob}, U)  = x^{n/2 - k + a}L^{2}\Omega^{k}(g_{\foc},
U) $, and consequently that
\begin{equation}
  \label{eq:fobequivfoc}
   \mathcal{W}\!H^{k}(g_{\fob}, a, U)  =  W\!H^{k}(g_{\foc}, (n/2) - k + a, U) .
\end{equation}
Because we will work predominantly with fibred cusp metrics
in this section, and because the particular choice of 
fibred cusp metric does not effect the weighted cohomology, we will often write simply
\begin{align*}
  W\!H^{k}(a, U) &:= W\!H^{k}(g_{\foc}, a, U).
\end{align*}
As explained in
Section~2.1 of \cite{HHM2004} using the Kodaira decomposition theorem, the same invariance under quasi-isometries holds for Hodge
cohomology.   For instance, for any two  foliated cusp metrics $g_{\foc}$ and $g_{\foc}'$ associated to the same choices of foliation $\cF$ and boundary defining function $x$,
$L^2\mathcal{H}^k(M, g_{\foc}') \cong L^2\mathcal{H}^k(M, g_{\foc})$.

The purpose of this section is to prove the following analogue of
Proposition 2 in \cite{HHM2004}.
\begin{theorem} \label{thm:wcihfol} 
Let $M$ be a
  manifold with boundary foliated by a Seifert fibration $\cF$,
  and let $g_{\foc}$ be an associated foliated cusp metric on $M$.  Suppose that
  $k-1+a-f/2 \neq 0$ whenever $H^k(\tF)\ne 0$.
Then 
\begin{equation}\label{eq:localequivweighted}
W\!H^*(g_{\foc}, a, M) \simeq I\!H^*_{[a+ f/2]}(X, B),
\end{equation}
where $[a+f/2]$ is the greatest integer less than or equal to
$a+f/2$.
\end{theorem}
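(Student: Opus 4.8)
The strategy is to apply the Goresky–MacPherson local-to-global recognition principle (Proposition \ref{thm:localtoglobal}) to the complex of sheaves $\mathcal{L}^*$ on $X$ obtained by sheafifying the presheaf $U \mapsto x^a L^2_d\Omega^*(g_{\foc},U)$, with perversity $\mathfrak{p}$ chosen so that $\mathfrak{p}(f+1) = [a+f/2]$; by Corollary \ref{icp.1} the intersection cohomology only depends on this value, so this pins down the right-hand side. These sheaves are fine (partitions of unity subordinate to open covers of $X$ act on $L^2$ forms), so the hypercohomology computes $W\!H^*(g_{\foc},a,M)$, and it remains to verify the two local conditions in \eqref{eq:localequiv}.

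First I would treat the principal (smooth) stratum: for $U \subset M$ a small ball disjoint from $B$, the metric $g_{\foc}$ is quasi-isometric to the Euclidean metric, so $x^a L^2_d\Omega^*(g_{\foc},U) = L^2_d\Omega^*(U)$ (no weight, since $x$ is bounded above and below there), and the Poincaré lemma for the $L^2$ de Rham complex on a ball gives $H^*(U,\mathcal{L}) = H^*(U,\mathbb{C})$. Next, the heart of the argument: for $p \in B$ in a stratum of codimension $\ell$, by Lemma \ref{regn.1} a regular neighborhood has the form $U = \mathbb{B}^{b-d}\times C_1(L)$ with $L = \tL/\Gamma$, and the key point is that here $\ell = f+1$ regardless of $d$ (the leaves are collapsed, so the strata $X_{n-2},\dots,X_{n-(f+1)}$ all equal $B$, matching the remark after \eqref{eq:dimensions}). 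I would compute $H^*(U,\mathcal{L})$ by a Künneth-type reduction over the $\mathbb{B}^{b-d}$ factor — here $g_{\foc}$ has a genuine (unweighted, non-degenerate) $h$-direction, so this factor contributes trivially — followed by an explicit computation of the weighted $L^2$ cohomology of the cone $C_1(L)$ with the foliated cusp metric. On the cone, $g_{\foc}$ degenerates like $x^2 k$ in the leaf/fibre directions, and a Mellin transform / separation of variables argument in $x$ (exactly as in the cone computations of \cite{HHM2004} and \cite{C1980}) shows that a form $\omega$ contributes to the weighted $L^2$ cohomology in degree $k$ precisely when the relevant indicial root inequality holds; the hypothesis $k-1+a-f/2 \neq 0$ when $H^k(\tF)\neq 0$ is exactly the condition ensuring no indicial root sits on the critical line, so there is no ambiguity and one gets the truncation: $H^k(U,\mathcal{L}) = I\!H^k_{\mathfrak{p}}(L)$ for $k \leq \ell-2-\mathfrak{p}(\ell)$ and $0$ above, with $\mathfrak{p}(\ell) = [a+f/2]$ falling out of the arithmetic $k-1+a-f/2 < 0 \iff k \leq [a+f/2] = \ell-2-\mathfrak{p}(\ell) + (\ell = f+1$ bookkeeping$)$.

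To run the cone computation I would need to know $H^*(L,\mathcal{L}|_L)$ inductively. This is where the depth bound \eqref{eq:deeper}, $\delta(L) < \delta(X)$, enters: $L$ is a stratified space of strictly smaller depth with a foliated-cusp-type structure induced from $g_{\foc}$, so by induction on depth $H^*(L,\mathcal{L}|_L) \cong I\!H^*_{\mathfrak{p}}(L)$ for the appropriate perversity, and the base case $\delta = 0$ is the smooth Poincaré lemma above. With both local conditions verified, Proposition \ref{thm:localtoglobal} gives $\mathbb{H}^*(X,\mathcal{L}) \cong I\!H^*_{\mathfrak{p}}(X) = I\!H^*_{[a+f/2]}(X,B)$, and the left side equals $W\!H^*(g_{\foc},a,M)$ by fineness. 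The main obstacle I anticipate is the cone computation with the genuinely foliated (rather than fibred) metric: one must push the separation-of-variables/indicial-root analysis through the resolution $q\colon \tL \to L$ by working $\Gamma$-equivariantly upstairs on $\bbB^d\times\tF$ and then taking $\Gamma$-invariants, checking that the weighted $L^2$ complex on $L$ is computed by the $\Gamma$-invariant part of the weighted $L^2$ complex on $\tL$ — this requires that the finite group $\Gamma$ acts by isometries (up to quasi-isometry) and that averaging over $\Gamma$ is bounded on the relevant $L^2$ spaces, which holds since $\Gamma$ is finite, but the bookkeeping interfacing the equivariant de Rham / Mellin analysis with the perversity truncation is the delicate part.
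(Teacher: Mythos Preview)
Your high-level strategy---apply Proposition~\ref{thm:localtoglobal} to the weighted $L^2$ sheaf, induct on depth, and pass to the resolution $\tL$ to take $\Gamma$-invariants---matches the paper's. But there are two concrete problems.

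First, the claim that ``$\ell=f+1$ regardless of $d$'' is false. The strata $X_{n-2},\dots,X_{n-(f+1)}$ being equal to $B$ only says that the \emph{smallest} singular codimension is $f+1$; a point $p$ in a depth-$d$ stratum of the orbifold $B$ lies in a stratum of $X$ of codimension $\ell=f+1+d$. You must therefore verify \eqref{eq:localequiv} for every such $\ell$, and in particular you need to know that $I\!H^k_{\mathfrak p}(L)$ vanishes for $k\ge \ell-1-\mathfrak p(\ell)$, which requires the perversity inequality $\mathfrak p(\ell)\le \mathfrak p(f+1)+\ell-(f+1)$ and an argument specific to $L$. (Relatedly, invoking Corollary~\ref{icp.1} is circular: it is a consequence of the theorem you are proving.)

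Second, and more seriously, the proposed ``Mellin transform / separation of variables in $x$'' on $C_1(L)$ does not go through. The cone variable on $C_1(L)$ is the radial coordinate $r$ in $\tV'\times\bbR_x$, not $x$ itself, and the foliated cusp metric is \emph{not} a cone metric in $r$: only the $\tF$-directions degenerate (as $x^2k$), while the $\tV'$-directions carry the nondegenerate base metric $h$. So neither the Cheeger nor the HHM cone computation applies to $C_1(L)$ directly. The paper avoids this entirely: rather than analyze $C_1(L)$, it proves Lemma~\ref{abp.1} by Mayer--Vietoris and deformation retraction on $\tL=\bbB^d\times\tF/\!\sim$, reducing $W\!H^k(a,C_1(\tL))\cong W\!H^k(a,\tL)\cong W\!H^k(a,C_\epsilon(\tF))$ to the genuine one-variable cone $C_\epsilon(\tF)$, where the Mellin/K\"unneth computation \eqref{eq:coneFcoho} does apply. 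The $\Gamma$-invariants step you anticipate is then exactly the commutative square \eqref{eq:square}. What your outline is missing is this reduction lemma; without it you have no way to compute $W\!H^*(a,C_1(L))$.
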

We notice two easy consequences of the theorem.
\begin{corollary}
  If $k - 1 + a - f/2 \neq 0$ whenever $H^k(\tF)\ne0$, then the
  differential in the complex \eqref{eq:localweightedcomplex} has
  closed range.  
\label{dclosed.1}\end{corollary}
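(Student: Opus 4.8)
The plan is to deduce the corollary directly from Theorem~\ref{thm:wcihfol} together with a soft functional-analytic principle, so for this statement the only ``obstacle'' is a bit of bookkeeping. First, under the hypothesis of the corollary Theorem~\ref{thm:wcihfol} identifies the cohomology $W\!H^k(g_{\foc},a,M)$ of the complex \eqref{eq:localweightedcomplex} with $I\!H^k_{[a+f/2]}(X,B)$ in each degree $k$. Since $\overline{M}$ is compact, so is $X=\overline{M}/{\sim}$, and hence every group occurring in \eqref{eq:inthom1} is finite-dimensional: $H^k(X\setminus B)$ because $X\setminus B$ is the interior of the compact manifold with boundary $\overline{M}$, $I\!H^k_{\mathfrak p}(X)$ because $X$ is a compact stratified space, and $H^k(X,B)$ because $(X,B)$ is a compact triangulable pair. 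Therefore $W\!H^k(g_{\foc},a,M)$ is finite-dimensional for all $k$.

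It then suffices to invoke the general principle that a cochain complex $(C^\bullet,d)$ of Hilbert spaces whose differentials $d_k$ are closed, densely defined operators and whose (unreduced) cohomology $\ker d_k/\operatorname{ran} d_{k-1}$ is finite-dimensional automatically has closed differentials. Here one takes $C^k=x^aL^2\Omega^k(g_{\foc},M)$ and $d_k$ the distributional exterior derivative with domain $x^aL^2_d\Omega^k(g_{\foc},M)$; this operator is closed, so its graph-norm domain is a Hilbert space and $\ker d_k$ is a closed subspace of $C^k$. To prove the principle, regard $d_{k-1}$ as a bounded map from the graph-norm Hilbert space $x^aL^2_d\Omega^{k-1}(g_{\foc},M)$ into the Hilbert space $\ker d_k$ (it takes values there because $d^2=0$, and it is bounded because the graph norm dominates the $L^2$ norm). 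Its range is $\operatorname{ran} d_{k-1}$, which by hypothesis has finite codimension in $\ker d_k$; and a bounded operator between Banach spaces with finite-codimensional range has closed range, by a short application of the open mapping theorem (complement the range by a finite-dimensional subspace $F$, extend $d_{k-1}$ to a bounded surjection onto $\ker d_k$ from $x^aL^2_d\Omega^{k-1}(g_{\foc},M)\oplus F$, note that its kernel is $\ker d_{k-1}\oplus\{0\}$, and use that the induced topological isomorphism carries the closed subspace $(x^aL^2_d\Omega^{k-1}(g_{\foc},M)/\ker d_{k-1})\oplus\{0\}$ onto $\operatorname{ran} d_{k-1}$). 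Hence $\operatorname{ran} d_{k-1}$ is closed in $\ker d_k$, and therefore in $C^k$.

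The genuinely substantive input is Theorem~\ref{thm:wcihfol}; the points that require care in carrying this out are merely that the distributional $d$ is closed on weighted $L^2$ currents, that the associated graph-norm domains are complete, and the elementary finite-codimension lemma just cited. Finally, the same argument gives closed range for the foliated boundary complex \eqref{eq:localweightedcomplexfob}, whose cohomology is finite-dimensional by the identification \eqref{eq:fobequivfoc} together with the case already treated.
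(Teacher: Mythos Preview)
Your proof is correct and follows the same approach as the paper: use Theorem~\ref{thm:wcihfol} to conclude that the weighted cohomology is finite-dimensional, and then invoke the standard fact that finite-dimensional cohomology forces the differentials to have closed range. The paper's proof is the one-line version of yours, omitting the functional-analytic justification you spell out.
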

\begin{proof}
By the theorem, $W\!H^*(g_{\foc}, a, M)$ is finite dimensional, so the differential must have closed range.
 \end{proof}

\begin{corollary}
The intersection cohomology $I\!H^*_{\pp}(X)$ depends only on $\pp(f+1)$. 
\label{icp.1}\end{corollary}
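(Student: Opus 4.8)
The plan is to read this off from the proof of Theorem~\ref{thm:wcihfol}. That proof realizes $W\!H^*(g_{\foc},a,M)$ as the hypercohomology of the complex of fine sheaves $U\mapsto x^aL^2_d\Omega^*(g_{\foc},U)$ on $X$ and then invokes Proposition~\ref{thm:localtoglobal}; the point is that the local conditions of Proposition~\ref{thm:localtoglobal} for \emph{this} sheaf complex involve the perversity $\pp$ only through the single value $\pp(f+1)$, which must equal $[a+f/2]$. Granting this, the corollary follows at once: given any perversity $\pp$, set $j:=\pp(f+1)$ and choose $a\in\bbR$ with $[a+f/2]=j$ satisfying the non-resonance hypothesis of Theorem~\ref{thm:wcihfol}. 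Such an $a$ exists, since the excluded weights $f/2+1-k$ (those $k$ with $H^k(\tF)\neq 0$) form a finite set of points pairwise at distance at least one, so some $a$ in the half-open unit interval $\{a:[a+f/2]=j\}$ avoids all of them. Then $I\!H^*_\pp(X)\cong W\!H^*(g_{\foc},a,M)$, and the right-hand side manifestly does not depend on $\pp$ beyond $j=\pp(f+1)$; when $j\geq f$ the same argument gives $H^*(X,B)$, and the case $j\leq -1$ does not arise for an honest perversity $\pp\colon\{0,\dots,n\}\to\bbN$.

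To see that the hypotheses of Proposition~\ref{thm:localtoglobal} depend on $\pp$ only through $\pp(f+1)$, one argues by induction on the depth $\delta(X)$, using the regular neighborhoods of Lemma~\ref{regn.1}. At the unique codimension $f+1$ stratum $B-B_{b-1}$ the link is the smooth fibre $\tF$, and the weighted-$L^2$ computation there forces the truncation degree to be $f-1-[a+f/2]$, hence $\pp(f+1)=[a+f/2]$; this is the only place $\pp(f+1)$ is used. At a point of a deeper stratum of codimension $\ell>f+1$ the regular neighborhood is $\bbB^{b-d}\times C_1(L)$ with $L=\tL/\Gamma$; by \eqref{eq:deeper} one has $\delta(L)<\delta(X)$, and $L$ is again the collapse of a Seifert fibration with the same fibre $\tF$, so by the inductive hypothesis $I\!H^*_\pp(L)$ depends only on $\pp(f+1)$. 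What is then needed to make \emph{every} $\pp$ compatible with the prescribed $\pp(f+1)$ work is the vanishing
\[
   I\!H^k_\pp(L)=0\qquad\text{for}\qquad f-\pp(f+1)\leq k\leq \dim L-1-\pp(f+1),
\]
which is exactly the range of degrees in which the cone truncation $\tau_{\leq\ell-2-\pp(\ell)}$ can change as $\pp(\ell)$ varies over values allowed by a fixed $\pp(f+1)$; on this range the truncation has no effect, so the corresponding pieces of the intersection chain sheaf, and hence $I\!H^*_\pp(X)$ itself, are unchanged.

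This ``gap'' vanishing is the crux, and it needs no further induction. Since $\tL$ is obtained from $\bbB^d\times\tF$ by collapsing the trivial $\tF$-bundle over $\partial\bbB^d=\bbS^{d-1}$, it has depth at most one, and a Mayer--Vietoris argument with the two-set cover by a neighborhood of $\bbS^{d-1}$ and the smooth locus, using only the classical cone formula for intersection cohomology, shows that $I\!H^*_\pp(\tL)$ decomposes into a ``low'' part $H^{\leq f-1-\pp(f+1)}(\tF)$, concentrated in degrees $\leq f-1-\pp(f+1)$, together with a ``high'' part which is a degree shift by $d$ of $H^{\geq f-\pp(f+1)}(\tF)$, concentrated in degrees $\geq d+f-\pp(f+1)$, with nothing in between; passing to $\Gamma$-invariants (over $\bbC$, $I\!H^*_\pp(L)=I\!H^*_\pp(\tL)^\Gamma$) preserves this concentration. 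In fact the proof of Theorem~\ref{thm:wcihfol} sees this directly from the computation of $W\!H^*(g_{\foc},a,L)$, so no additional work is required there, which is exactly why the present statement is genuinely a corollary. The main obstacle is thus this concentration result for the links $L$ of the deeper strata together with the bookkeeping that matches its gap with the perversity-dependent truncation degrees.
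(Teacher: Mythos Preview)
Your argument is correct and follows the same route as the paper: deduce the corollary directly from Theorem~\ref{thm:wcihfol} by choosing, for a given perversity $\pp$, a weight $a$ with $[a+f/2]=\pp(f+1)$ satisfying the non-resonance hypothesis, so that $I\!H^*_\pp(X)\cong W\!H^*(g_{\foc},a,M)$ and the right-hand side sees $\pp$ only through $\pp(f+1)$. The paper's proof is the one-line version of this: it simply takes $a$ with $2a\notin\bbZ$ (which forces $k-1+a-f/2\neq 0$ for every integer $k$) and $[a+f/2]=\pp(f+1)$; your finite-avoidance argument is equivalent but slightly less slick. Your second and third paragraphs correctly unpack why the proof of Theorem~\ref{thm:wcihfol} really does verify the local hypotheses of Proposition~\ref{thm:localtoglobal} for \emph{every} perversity with the prescribed value at $f+1$---in particular the ``gap'' vanishing for the deeper links, which the paper obtains via the injection $I\!H^k_{[a+f/2]}(L)\hookrightarrow W\!H^k(a,C_\epsilon(\tF))$ together with the perversity inequality $\pp(\ell)\le\pp(f+1)+\ell-(f+1)$. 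That elaboration is accurate, but the paper leaves it implicit in the proof of the theorem and records only the one-line deduction here.
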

\begin{proof}
Use the identification \eqref{eq:localequivweighted} with $a\in \bbR$ such that $2a \notin \mathbb{Z}$ and $[a+ \frac{f}{2}]= \pp(f+1)$.
\end{proof}

The main tool in the proof of Theorem \ref{thm:wcihfol} is Proposition
\ref{thm:localtoglobal}.  To apply this proposition, we must know that the sheaf which associates to each open set $U
\subset X$ the complex in \eqref{eq:localweightedcomplex}
is fine.  Indeed, given an open cover $\set{ O_{k} }_{k \in K}$, we can choose a
subcover $\set{ U_{i} }_{i \in I}$ so that each $U_{i}$ is a regular
neighborhood as in \eqref{eq:regneighb} with a resolution $q\colon \wt{U}_{i}
\lra U_{i}$. Let
$\set{V_{i}}_{i \in I}$ be yet another subcover so that $\overline{V_{i}} \subset
U_{i}$.  Let $\tV_i= q^{-1}(V_i)$.  If $U_i$ is supported away from
$B$, let $\rho_i\in \mathcal{C}^{\infty}(U_i)$ be a nonnegative function with compact support
equal to $1$ on    $V_i$.  On the other hand, if $U_i$ intersects $B$, we are assuming that 
$\widetilde{U}_i$ is of the form $\wt{U}_i=\mathbb{B}^{b} \times C_{\epsilon}(\wt{F})$.  Thus, there is 
a projection $\widetilde{\pi}: \wt{U}_i\to \mathbb{B}^{b}\times [0,\epsilon)_x$ given by
$$
   \widetilde{\pi}(b,x,f)= (b,x).
$$ 
Let $\phi_i\in \mathcal{C}^{\infty}(\mathbb{B}^{b}\times
[0,\epsilon)_x)$ be a nonnegative smooth function
with compact support such that $\widetilde{\pi}^*\phi_i$ is equal to $1$ on $\tV_i$ and $\phi_i$ is constant near $x=0$.  
Clearly, since $\widetilde{\pi}^*\phi_i$ is constant along $\tF$, the
norm of $d(
\widetilde{\pi}^*\phi_i)$ is bounded with respect to a choice of 
fibred cusp metric on $\widetilde{U}_i$.  Averaging with respect to $\Gamma$, we can assume 
$\phi_i$ is $\Gamma$-invariant.  Thus, the function  $ \widetilde{\pi}^*\phi_i$ descends to
a smooth function $\rho_i$ with compact support on $U_i$.  As usual,
the functions $\chi_{i} := \rho_{i} / \sum \rho_{j}$ form a partition
of unity subordinate to the cover  $\set{ O_{k} }_{k \in K}$.  The
fact that the differential $d\chi_i$ has norm bounded with respect to
a choice of foliated cusp metric insures that it acts by multiplication on $x^{a}L^{2}_d\Omega^{k}(g_{\foc}, U_i)$.  Thus, the associated sheaf is fine.

As a preliminary to the proof of Theorem \ref{thm:wcihfol},
we discuss the computation of the weighted cohomology of
$C_{\epsilon}(\wt{F})$ from Proposition 2 of \cite{HHM2004}. There, they show that, provided
\begin{equation}\label{eq:acondition}
k - 1 + a - f/2 \neq 0 \quad \mbox{whenever} \quad H^k(\tF)\ne 0,
\end{equation}
we have
\begin{equation}\label{eq:coneFcoho}
  W\!H^{k}(a, C_{\epsilon}(\wt{F})) \simeq 
    \left\{
      \begin{array}{cl}
       H^{k}(\wt{F}) & \mbox{ if } k < f/2 - a \\
        0 & \mbox{ if } k \geq f/2 - a .
      \end{array}
      \right. 
\end{equation}
If $\iota \colon \wt{F} \lra
C_{\epsilon}(\wt{F})$ is the inclusion of $\wt{F}$ into $\wt{F} \times
\set{\epsilon}$, then the isomorphism with $H^{k}(\wt{F})$ in the case
$k < f/2 - a$ is induced by $\iota$.  Since we will need the proof in a moment, we recall it now but direct
the reader to \cite{HHM2004} for more
details.  By the $L^{2}$-K\"unneth theorem from \cite{Z1982}, as long
as \eqref{eq:acondition} holds, $W\!H^k(a, C_{\epsilon}(\wt{F})) $ is
isomorphic to
\begin{equation}\label{eq:Kunneth}
\begin{array}{rl}
  &\lp W\!H^0\lp (0,\epsilon),dx^2/x^2,k+a-f/2 \rp
  \otimes H^k(\wt{F}) \rp \oplus \\
 & \
 \lp W\!H^1 \lp (0,\epsilon),dx^2/x^2,k-1+a-f/2 \rp  \otimes H^{k-1}(\wt{F}) \rp.
\end{array} 
\end{equation}
It is trivial to check that $ W\!H^1 \lp (0,\epsilon),dx^2/x^2,  b \rp = 0$
if $b \neq 0$, and that $ W\!H^0\lp (0,\epsilon),dx^2/x^2, b\rp$ is $0$ for
$b \geq 0$ and $\mathbb{C}$ for $b < 0$.  Combining these two facts
with \eqref{eq:Kunneth} proves \eqref{eq:coneFcoho}, and the
naturality of the K\"unneth formula shows that the isomorphism is induced by
the inclusion $\iota$.

For the space $\wt{L}$ in Lemma \ref{regn.1}, if $d= 1$ we can use the
exact same argument to compute its weighted cohomology.  In this case, $\wt{L}
= [0,1]_{x} \times \wt{F} / (\set{0} \times \wt{F} \cup \set{1} \times
\wt{F})$, and by 
invariance, we can take the fibred cusp metric on $\wt{L}$ to be
\begin{equation}\label{eq:metricontildeL}
g = f^{-1}(x) dx^{2} + f(x)\wt{k},
\end{equation}
where $f$ is a smooth positive
function on $(0,1)$ with
\begin{equation}
  \label{eq:3}
  f(x) = \left\{
    \begin{array}{ccc}
      x^{2} &\mbox{for}& x < 1/4 \\
      (x - 1)^{2} &\mbox{for}& x > 3/4, \\
    \end{array} \right.
\end{equation}
and $\wt{k}$ is a metric on $\wt{F}$.  An argument identical to
that of the previous paragraph yields 
\eqref{eq:coneFcoho} with $C_{\epsilon}(\wt{F})$ replaced by
$\wt{L}$ in \eqref{eq:coneFcoho}
and $dx^{2}/x^{2}$ replaced by $dx^{2}/f$ in \eqref{eq:Kunneth}.  In fact, if for small
$\epsilon > 0$, we consider $C_{\epsilon}(\wt{F})  \xrightarrow{\iota} \wt{L}$, where 
$\I(x,f) =  (x,f)$, then we have shown that
\begin{equation}
  \label{eq:d=1}
      \iota^{*} \colon  W\!H^{k}(a, \wt{L}) \lra W\!H^{k}(a, C_{\epsilon}(\wt{F})) \\
\end{equation}
is an isomorphism (provided \eqref{eq:acondition} holds).

To obtain a similar result for $d>1$,  consider the following sequence of inclusions
\begin{equation}\label{eq:maps}
  \xymatrix{
    C_{\epsilon}(\wt{F}) \ar[r]^-{\I} & \wt{L} \ar[r]^-{j}& C_{1}(\wt{L})
    }
\end{equation}
where $\I$ is defined as follows.  Near the boundary of $\mathbb{B}^{d}$,
let $(x, y)$ be coordinates where $x$ is a b.d.f.\ and $y$ is a
coordinate on the sphere.  Then set 

\begin{equation}
  \label{eq:42}
  \begin{split}
    \I(x, f) &:= (x, y, f) \mbox{ for some fixed but arbitrary } y.
    \end{split}
\end{equation}
We endow $\wt{L}$ with a foliated cusp metric by considering the
foliated cusp metric $g_{\foc}$ of $X$ on a
canonical neighborhood $U \simeq \mathbb{B}^{b - d} \times C_{1}(L)$ as in
\eqref{eq:regneighb}, pulling $g_{\foc}$ to $\set{p_{0}}\times C_1(L)$
for a fixed $p_{0} \in \bbB^{b-d}$, i.e.\ to the inclusion of a cone
$C_1(L)$ over an arbitrary point in $\bbB^{b-d}$, and then lifting to
$C_1(\wt{L})$.  Finally, endow  $C_{\epsilon}(\wt{F})$ with the pullback
metric induced by $j\circ \I$ in \eqref{eq:maps}.

\begin{lemma} If $k-1+a-\frac{f}{2}\neq 0$ whenever $H^k(\tF)\ne 0$, then the induced
mappings
\begin{equation}
  \label{eq:4}
  \begin{split}
    \I^{*} \colon  W\!H^{k}(a, \wt{L}) &\lra W\!H^{k}(a, C_{\epsilon}(\wt{F})) \\
    j^{*} \colon  W\!H^{k}(a, C_{1}(\wt{L})) &\lra  W\!H^{k}(a, \wt{L}).
  \end{split}
\end{equation}
are isomorphisms.  
\label{abp.1}\end{lemma}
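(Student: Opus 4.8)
The plan is to reduce the computation of $W\!H^{k}(a,\wt{L})$ and $W\!H^{k}(a,C_{1}(\wt{L}))$ to that of $W\!H^{k}(a,C_{\epsilon}(\wt{F}))$, which is already known by \eqref{eq:coneFcoho}, using the $L^{2}$-K\"unneth theorem of Zucker \cite{Z1982} twice. First I would treat $j^{*}$. Writing $C_{1}(\wt{L}) = \wt{L}\times[0,1]_{r}/(\wt{L}\times\set{0})$, a foliated cusp metric on $C_{1}(\wt{L})$ is quasi-isometric to $dr^{2}/r^{2} + r^{2}g_{\wt{L}}$ for a fixed metric $g_{\wt{L}}$ on $\wt{L}$ (scaling appropriately near the other end of the $r$-interval, exactly as in \eqref{eq:3}). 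The K\"unneth theorem then expresses $W\!H^{k}(a, C_{1}(\wt{L}))$ in terms of the one-dimensional factors $W\!H^{0}\lp(0,1),dr^{2}/r^{2}, k+a'-\dim\wt{L}/2\rp$ and $W\!H^{1}$ of the same, tensored with $H^{\bullet}(\wt{L})$; here a subtlety is that the $L^{2}$-cohomology of $\wt{L}$ itself is infinite-dimensional near the singular point unless we incorporate the weight, but since $\wt{L}$ is compact and $C_{1}(\wt{L})$ carries the \emph{cone} over it, one works with the reduced $L^{2}$-cohomology $H^{k}_{(2)}(\wt{L})$, which equals the ordinary $H^{k}(\wt{L})$ in the relevant range by the inductive structure. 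The cross-term $W\!H^{1}$ vanishes for nonzero weight and the $W\!H^{0}$ factor is $\mathbb{C}$ for negative weight and $0$ otherwise, so one gets $W\!H^{k}(a,C_{1}(\wt{L})) \simeq H^{k}_{(2)}(\wt{L},a)$ in the appropriate range, and this isomorphism is induced by the inclusion $j$ of $\wt{L}$ at a fixed radius by naturality of K\"unneth — giving that $j^{*}$ is an isomorphism onto its image, which is all of $W\!H^{k}(a,\wt{L})$.

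Next I would treat $\I^{*}$. Near $\p\mathbb{B}^{d}$ the space $\wt{L}$ looks like $\p\bbB^{d}\times C_{\text{loc}}(\wt{F})$ with the metric (after quasi-isometry) of the form $dx^{2}/x^{2} + \pi^{*}g_{\bbS^{d-1}} + x^{2}\wt{k}$, i.e.\ a foliated cusp metric whose base is the sphere $\bbS^{d-1}$ and whose leaf is $\wt{F}$; globally $\wt{L}$ is obtained from $\bbB^{d}\times\wt{F}$ by collapsing the boundary fibration. The key point is that $\bbS^{d-1}=\p\bbB^{d}$ is a \emph{smooth compact} manifold, so $\wt{L}$ is again (the total space associated to) a genuine fibration over the smooth manifold $\bbB^{d}$, and the induction hypothesis of the larger argument — or a direct K\"unneth computation, splitting $\bbB^{d}\cong C_{1}(\bbS^{d-1})$ and then using Poincar\'e's lemma for the smooth ball — shows that the inclusion of a single fibre $C_{\epsilon}(\wt{F})\hookrightarrow\wt{L}$ over an arbitrary point $y\in\bbB^{d}$ induces an isomorphism on weighted cohomology in the range dictated by \eqref{eq:acondition}, because the $\bbB^{d}$-directions are contractible and contribute no weighted cohomology beyond degree $0$. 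Homotopy invariance of the construction under translation of the point $y$ (the maps $\I$ for different $y$ are homotopic through maps of the same form) shows the result is independent of the chosen $y$, as needed for \eqref{eq:4}.

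The main obstacle I anticipate is the bookkeeping in the K\"unneth argument when one of the factors is itself singular (this is exactly the case for $j^{*}$, where $\wt{L}$ is not smooth): one must be careful that the hypothesis \eqref{eq:acondition} is the \emph{only} condition needed to avoid the ``middle'' weight where $W\!H^{0}$ of the one-dimensional factor jumps, and that the weighted $L^{2}$-cohomology of the singular factor $\wt{L}$ that appears in K\"unneth is precisely $\mathcal{W}\!H^{k}(g_{\foc},a,\wt{L})$ with the \emph{same} weight $a$ (after the standard shift $a\mapsto a + \dim\wt{L}/2 - k$ relating the cone weight to the intrinsic weight). Checking that the degree shifts match up on both ends of the interval in \eqref{eq:3}, and that the resulting isomorphism is genuinely induced by the geometric inclusion rather than merely abstract, is the delicate part; once that is in place, both isomorphisms in \eqref{eq:4} follow by composing with \eqref{eq:coneFcoho} and invoking naturality of the $L^{2}$-K\"unneth formula.
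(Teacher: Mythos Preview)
Your approach reverses the order of the paper's proof and, in doing so, introduces a genuine difficulty in the treatment of $j^{*}$ that the paper avoids entirely. The central problem is your claim that a foliated cusp metric on $C_{1}(\wt{L})$ is quasi-isometric to $dr^{2}/r^{2}+r^{2}g_{\wt{L}}$. Recall from Lemma~\ref{regn.1} that the cone $C_{1}(\wt{L})$ arises via the identification $\bbB^{\tV'\times\bbR_{x}}_{+}\cong C_{1}(\bbS^{\tV'\times\bbR_{x}}_{+})$, so the cone parameter $r$ is the radial coordinate in $\tV'\times\bbR_{x}$, while the weight $x^{a}$ and the singular term $dx^{2}/x^{2}$ in $g_{\foc}$ involve only the $\bbR_{x}$-direction. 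Since $x$ is (roughly) $r$ times an angular coordinate on the half-sphere, neither the weight nor the metric factors through the product $[0,1)_{r}\times\wt{L}$, and Zucker's $L^{2}$-K\"unneth theorem does not apply in the $r$-variable. You flag ``bookkeeping'' as the obstacle, but the issue is structural, not cosmetic: there is no warped-product decomposition of $(C_{1}(\wt{L}),g_{\foc},x^{a})$ along $r$.

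The paper sidesteps this by proving $\I^{*}$ first, via induction on $d$ and a Mayer-Vietoris argument: one splits $\bbB^{d}$ into two overlapping half-balls $U=\{y^{1}<1/2\}$ and $V=\{y^{1}>-1/2\}$, each of whose preimages in $\wt{L}$ retracts (through stratification-preserving maps) onto $C_{\epsilon}(\wt{F})$, while the intersection retracts onto the $(d-1)$-dimensional analogue $\bbB^{d-1}\times\tF/\!\sim$, handled by the inductive hypothesis. The connecting map $\wt{\Delta}$ in the resulting sequence is then visibly surjective, forcing $\wt{\delta}=0$ and giving $\I^{*}$ as an isomorphism. Once $\I^{*}$ is known, $j^{*}$ follows in one line: $C_{\epsilon}(\wt{F})$ is a (stratification-preserving) deformation retract of $C_{1}(\wt{L})$, so $\I^{*}\circ j^{*}$ is an isomorphism, and hence $j^{*}=(\I^{*})^{-1}\circ(\I^{*}\circ j^{*})$ is too. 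No K\"unneth over a singular factor is needed.

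Your sketch for $\I^{*}$ also mislocates the map: $\I$ sends $C_{\epsilon}(\wt{F})$ into a neighborhood of the singular stratum $\partial\bbB^{d}\subset\wt{L}$ (at a fixed $y\in\bbS^{d-1}$), not into a smooth fibre over the interior of $\bbB^{d}$, so the ``contractible $\bbB^{d}$-directions contribute nothing'' heuristic does not directly apply. The Mayer-Vietoris induction handles this correctly because each half retracts onto a copy of $C_{\epsilon}(\wt{F})$ sitting near the boundary, matching the actual image of $\I$.
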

\begin{proof}
To see that $\I^*$ is an isomorphism, we will proceed by induction on $d$,
\eqref{eq:d=1} being the base case.  Assume the result holds for
$d\le m-1$.  We need to show the result holds for $d=m$.  Let
$(y^1,\ldots,y^d)$ denote the standard coordinates on  $\bbR^d$ and
consider the open sets in $\bbB^d$ given by
\begin{equation}
\begin{gathered}
 U= \set{ y= (y^1,\ldots, y^d)\in \bbB^d : y^1 < 1/2 } \\
 V= \{ y= (y^1,\ldots, y^d)\in \bbB^d : y^1 > - 1/2\}.
 \end{gathered}
\end{equation}
Let $\widetilde{U}= \pr^{-1}(U)$ and $\tV= \pr^{-1}(V)$ be the corresponding open sets in $\tL$, where 
$\pr\colon \tL= \bbB^d\times \tF/\sim \, \to \bbB^d$ is the natural projection.
Since $\tL= \widetilde{U}\cup \tV$, there is an induced Mayer-Vietoris
sequence
\begin{equation}\label{eq:MayerV}
  \begin{split}
    &\cdots \xrightarrow{\wt{\Delta}} W\!H^{k-1}(a,\wt{U} \cap \wt{V})
    \xrightarrow{\wt{\delta}} W\!H^{k}(a,\wt{L})
    \xrightarrow{\wt{\psi}}   \\
    &\qquad W\!H^{k}(a,\wt{U} ) \oplus W\!H^{k}(a,\wt{V})
    \xrightarrow{\wt{\Delta}} W\!H^{k}(a,\wt{U} \cap \wt{V})
    \xrightarrow{\wt{\delta}} \cdots
  \end{split}
\end{equation}
Since $\bbB^{d-1}\times \tF/\sim$ is a deformation retract of
$\widetilde{U}\cap \tV$ through maps which preserve the stratification, we see by our induction
hypothesis that 
$$
    W\!H^{k}(a,\wt{U} \cap \wt{V}) \simeq W\!H^{k}(a, C_{\epsilon}(\wt{F})).
$$
Since $\widetilde{U}$ and $\tV$ both retract onto
$C_{\epsilon}(\wt{F})$ (again, preserving the stratification), we also have canonical identifications
\[
   W\!H^{k}(a,\wt{U})=W\!H^{k}(a, C_{\epsilon}(\wt{F})), \quad W\!H^{k}(a, \wt{V})=W\!H^{k}(a, C_{\epsilon}(\wt{F})).
   \]
In terms of these identifications, the map $\wt{\Delta}$ is given by
$$
   \begin{array}{ccl}
    \wt{\Delta} \colon  W\!H^{k}(a, C_{\epsilon}(\wt{F}))\oplus W\!H^{k}(a, C_{\epsilon}(\wt{F})) &\to &
       W\!H^{k}(a, C_{\epsilon}(\wt{F}))  \\
        (\omega_1,\omega_2) & \mapsto & \omega_1-\omega_2  \end{array}
$$
and is surjective.  The boundary homomorphism $\wt{\delta}$ is therefore trivial and the map
$\wt{\psi}$ gives an isomorphism
$$
   \wt{\psi}\colon W\!H^{k}(a, \wt{L})  \to \ker \wt{\Delta}\cong W\!H^{k}(a, C_{\epsilon}(\wt{F})).
   $$
Since $\wt{\psi}$ corresponds to the map $\I^*$ under the identification $\ker \wt{\Delta}\cong W\!H^{k}(a, C_{\epsilon}(\wt{F}))$, we see $\I^*$ induces an isomorphism in weighted cohomology.

To see that $j^*$ induces an isomorphism, notice that since $C_{\epsilon}(\wt{F})$ is a deformation retract of $C_{1}(\tL)$, we easily see that 
$$
   \I^*\circ j^{*} \colon  W\!H^{k}(a, C_{1}(\wt{L})) \lra W\!H^{k}(a, C_{\epsilon}(\wt{F}))   $$
is an isomorphism, and thus that $j^*= (\I^*)^{-1}\circ (   \I^*\circ j^{*})$ is an isomorphism as well. 
\end{proof}

\begin{proof}[Proof of Theorem \ref{thm:wcihfol}]
We proceed by induction on the depth
of $X$, the base case being that of a smooth manifold, where the theorem is trivial.  Assume that Theorem
\ref{thm:wcihfol} is proven for foliated cusp metrics of depth not
greater than $\delta-1$ and that $\delta(X) = \delta$.

Let $q \in X_{n - j +
  1} - X_{n - j }$ and choose a regular
neighborhood as described in Lemma~\ref{regn.1}, $U \simeq V \times
C_{1}(L)$.  By Proposition~\ref{thm:localtoglobal}, we need to show that $W\!H^{k}(a,U) \simeq
W\!H^{k}(a,C_{1}(L))$ is
isomorphic to $I\!H^{k}_{[a + f/2]}(U) \simeq
I\!H^{k}_{[a + f/2]}(C_{1}(L))$.  In a moment, we will show that
\begin{equation}\label{eq:conetonocone}
I\!H^{k}_{[a + f/2]}(C_{1}(L)) \cong I\!H^{k}_{[a + f/2]}(L).
\end{equation}
Since the depth of $L$ is smaller than that of $X$, we have by our induction hypothesis that
\begin{equation}\label{eq:inductionequiv}
    I\!H^{k}_{[a + f/2]}(L)\cong W\!H^{k}(a,L).
\end{equation}

Thus, it suffices, thanks to \eqref{eq:conetonocone}, to show that the inclusion $i \colon L \to C_{1}(L)$ induces an isomorphism in weighted cohomology.  To see this,  consider the commutative diagram
  \begin{equation}
    \label{eq:square}
    \xymatrix{
      W\!H^{k}(a, C_{1}(\wt{L})) \ar[r]^{j^*} \ar@/^/[d] & W\!H^{k}(a,\wt{L}) \ar@/^/[d] \\
      W\!H^{k}(a,C_{1}(L)) \ar[r]^{i^*} \ar@/^/[u] & W\!H^{k}(a,L). \ar@/^/[u]
    }
  \end{equation}
  The upward and downward pointing maps are, respectively, the
  pullback via the projection and the averaging map, and they are
  injective and surjective, respectively.  In fact, the images of the
  upward pointing maps are exactly the $\Gamma$-invariant
  elements of the cohomology (i.e.\ those which can be represented by
  $\Gamma$-invariant forms.)  By Lemma~\ref{abp.1}, we know that $j^*$ is an
  isomorphism, and it obviously identifies the $\Gamma$-invariant elements in
  the two groups.  Since these are exactly the images of the upward
  pointing maps, commutativity shows that $i^*$ is an
  isomorphism as well and the theorem holds for $X$.

To see that \eqref{eq:conetonocone} holds, note that using
 \eqref{eq:inductionequiv}, \eqref{eq:square}, and Lemma~\ref{abp.1}, we have
 \begin{equation*}
   I\!H^{k}_{[a + f/2]}(L) \simeq W\!H^{k}(a,L) \hookrightarrow
   W\!H^{k}(a,\wt{L}) \simeq W\!H^{k}(a,C_{\epsilon}(\wt{F})).
 \end{equation*}
In degree $k \geq \ell - 1 - p(\ell)$, the latter is zero by \eqref{eq:coneFcoho} and 
\begin{equation*}
  p(\ell) \leq p(f + 1) + \ell - (f + 1) \implies \ell - 1 - p(\ell)
  \geq -a + f/2,
\end{equation*}
so \eqref{eq:conetonocone} follows from
 \eqref{eq:localequiv} in Proposition \ref{thm:localtoglobal}.

\end{proof}

 Using a similar approach by induction on the depth together with Proposition~\ref{thm:localtoglobal}, one can show the following result about  the intersection cohomology
 of orbifolds.  This is well-known, but since we were unable to find a reference, we include a proof for completeness.  
 
 \begin{theorem}
   The intersection cohomology  of a compact orbifold $B$ is
   independent of the  choice of standard perversity $\pp$ and is given by 
   singular cohomology (with real coefficients), $$
   I\!H_{\pp}^*(B) \cong H^k(B;\bbR).
 $$ 
 \label{ico.1}\end{theorem}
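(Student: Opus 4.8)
The plan is to mimic the inductive strategy used in the proof of Theorem~\ref{thm:wcihfol}, replacing the weighted $L^2$-complex by the complex of ordinary (say, smooth) differential forms and verifying that the hypotheses of Proposition~\ref{thm:localtoglobal} hold with $\mathcal{L}^*$ the sheaf of smooth forms on $B$. Concretely, I would let $(\Omega^*, d)$ be the de Rham complex of sheaves on $B$; it is fine (partitions of unity exist on orbifolds), its hypercohomology is $H^*(B;\bbR)$ by the orbifold de Rham theorem, and I must check the local condition \eqref{eq:localequiv} at a singular point $p$ with regular neighborhood $U \cong \bbB^{b-d}\times C_1(L)$ as in Lemma~\ref{regn.1}. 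Since de Rham cohomology is a homotopy invariant and $U$ is contractible, $H^*(U;\bbR) = H^*(\{pt\};\bbR)$, which is $\bbR$ in degree $0$ and $0$ otherwise. So I need to know that $I\!H^k_{\pp}(U)$ is also $\bbR$ in degree $0$ and $0$ in positive degrees, for every standard perversity $\pp$.

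The key step is therefore an induction on $\delta(B)$ showing that for a compact orbifold $B$ (of depth $\delta$) and any standard perversity $\pp$, one has $I\!H^*_{\pp}(B)\cong H^*(B;\bbR)$. The base case $\delta=0$ is a smooth manifold, where intersection cohomology is ordinary cohomology. For the inductive step, I would apply Proposition~\ref{thm:localtoglobal}: by Lemma~\ref{regn.1} (or rather its orbifold analogue — note that the singular space there is $X$, but the same local structure theory applies to $B$ itself via the isotropy-type stratification described after Remark~\ref{tfb.2b}), a point $p$ in a stratum of codimension $\ell$ has a regular neighborhood $U\cong \bbB^{b-\ell}\times C_1(\mathrm{link})$, where the link $\mathrm{Lk}$ is a compact orbifold of strictly smaller depth. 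By the cone formula for intersection cohomology, $I\!H^k_{\pp}(C_1(\mathrm{Lk}))$ equals $I\!H^k_{\pp}(\mathrm{Lk})$ for $k\le \ell-2-\pp(\ell)$ and $0$ for $k\ge \ell-1-\pp(\ell)$; by the induction hypothesis $I\!H^k_{\pp}(\mathrm{Lk})\cong H^k(\mathrm{Lk};\bbR)$, so $I\!H^k_{\pp}(C_1(\mathrm{Lk}))$ is concentrated in degree $0$ with value $\bbR$ precisely when $\ell-2-\pp(\ell)\ge 0$, i.e.\ $\pp(\ell)\le \ell-2$. For a standard perversity this holds for all $\ell\ge 2$ since $\pp(\ell)\le \ell-2$ is exactly the defining inequality (using $\pp(0)=\pp(1)=0$ and $\pp(\ell)\le\pp(\ell-1)+1$). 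Hence $I\!H^*_{\pp}(U)=\bbR$ in degree $0$ and $0$ otherwise, matching $H^*(U;\bbR)$, and the local hypothesis \eqref{eq:localequiv} of Proposition~\ref{thm:localtoglobal} is satisfied. The proposition then gives $\mathbb{H}^*(B,\Omega^*)\cong I\!H^*_{\pp}(B)$, and since $\mathbb{H}^*(B,\Omega^*)=H^*(B;\bbR)$, the theorem follows; in particular the right-hand side is visibly independent of $\pp$.

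The main obstacle I anticipate is bookkeeping rather than conceptual difficulty: one must carefully justify that the orbifold $B$ admits, at each stratum point, a regular neighborhood of the cone form with link again a compact orbifold of lower depth — this is essentially the content of the slice theorem already invoked in the proof of Lemma~\ref{regn.1} (via Theorem~2.4.1 of \cite{DK2000}), applied to the orbifold chart $(\cU,\tU,\Gamma,\varphi)$ around $p$, so the link is $(\tS/\Gamma)$ for a $\Gamma$-sphere $\tS$, but one must check this quotient inherits a genuine orbifold structure of lower depth. A secondary point worth stating cleanly is the orbifold de Rham theorem $\mathbb{H}^*(B,\Omega^*)\cong H^*(B;\bbR)$; this is standard (\eg via Satake, or by viewing $B$ as the leaf space and using the basic cohomology of the Riemannian foliation), so I would simply cite it. Finally, one should remark that the degree indexing in the statement ($I\!H^*_{\pp}(B)\cong H^k(B;\bbR)$) is a harmless typo and the isomorphism is degreewise, $I\!H^k_{\pp}(B)\cong H^k(B;\bbR)$.
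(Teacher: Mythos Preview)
Your strategy --- induction on depth and Proposition~\ref{thm:localtoglobal} applied to the de~Rham sheaf --- is exactly the paper's. There is, however, a real gap in your inductive step. You assert that the inductive hypothesis $I\!H^k_{\pp}(\mathrm{Lk})\cong H^k(\mathrm{Lk};\bbR)$ together with the cone formula forces $I\!H^*_{\pp}(C_1(\mathrm{Lk}))$ to be concentrated in degree $0$ once $\pp(\ell)\le\ell-2$; but the cone formula gives $I\!H^k_{\pp}(C_1(\mathrm{Lk}))\cong H^k(\mathrm{Lk};\bbR)$ for every $k\le\ell-2-\pp(\ell)$, and nothing you have said forces these groups to vanish for $1\le k\le\ell-2-\pp(\ell)$. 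The missing ingredient --- which you mention in your final paragraph only as a structural check about orbifold structure and depth --- is that the link is a finite quotient of a sphere, $\mathrm{Lk}=\bbS^{\ell-1}/\Gamma$, so by averaging $H^k(\mathrm{Lk};\bbR)\cong H^k(\bbS^{\ell-1};\bbR)^{\Gamma}$ is nonzero only in degrees $0$ and $\ell-1$. This is the essential \emph{cohomological} input, made explicit in the paper's proof as equation~\eqref{orbc.2}. With it in hand, the perversity inequality that matters is $\pp(\ell)\ge 0$, which places the surviving top-degree class in the truncated range $k\ge\ell-1-\pp(\ell)$; your inequality $\pp(\ell)\le\ell-2$ is needed only to ensure degree $0$ is retained.
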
  
\begin{proof}
Really, we will show intersection cohomology is given by the (orbifold) de Rham cohomology, which is identified with the singular cohomology (with real coefficients).    
We will proceed by induction on the depth of the orbifold.  For an orbifold of depth $0$, the result is trivial.  Thus, assume the result is true for compact orbifolds of depth less than $\delta$ and suppose $B$ is a compact orbifold of depth $\delta$.  Let $\Omega^{*}$ be the complex of sheaves of smooth orbifold forms on $B$.  For a point $b\in B$, we can find a neighborhood of the form
$U= \bbB^k \times C_1(L)$, where $L$ is an orbifold of depth less than
the one of $B$ obtained by taking the quotient of the sphere $\bbS^{\ell-1}$ by the smooth action of some finite group $\Gamma$, $L= \bbS^{\ell-1}/\Gamma$.

Since $U$ is contractible, we have that
\begin{equation}
  H^k(\Omega^*(U))= \left\{   \begin{array}{ll}\bbR; & k=0, \\
    0, & k>0.  \end{array} \right.
\label{orbc.1}\end{equation}
On the other hand, by our induction hypothesis, the result holds on $L=\bbS^{\ell-1}/\Gamma$, so that
\begin{equation}
  I\!H_{\pp}^k(L) =H^k(\Omega^*(L))= \left\{   \begin{array}{ll}\bbR; & k=0, \ell-1, \\
    0, & \mbox{otherwise}.  \end{array} \right.
\label{orbc.2}\end{equation}
Since 
$$
    \pp(\ell)\ge 0 \; \Longrightarrow \; \ell-1-\pp(\ell)\le \ell-1,
$$
we thus see that  we can rewrite \eqref{orbc.1} as
\begin{equation}
  H^k(\Omega^*(U))= \left\{   \begin{array}{ll} I\!H_{\pp}^k(L)  ; & k\le \ell-2-\pp(\ell), \\
    0, & k\ge \ell-1-\pp(\ell).  \end{array} \right.
\label{orbc.3}\end{equation}
Since $b\in B$ was arbitrary, we conclude by Proposition \ref{thm:localtoglobal} that 
$H^k(\Omega^*(B))= I\!H_{\pp}^k(B)$ for all $k$, completing the inductive step.    
\end{proof}

Notice in particular that this implies the following known fact about orbifolds, \cf \cite{Chen-Ruan2004}.

\begin{corollary}
Poincar\'e duality holds for the singular cohomology with real coefficients of a compact oriented orbifold. 
\end{corollary}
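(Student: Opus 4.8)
The plan is to obtain the corollary from Theorem~\ref{ico.1} together with the generalized Poincar\'e duality of Goresky and MacPherson for intersection cohomology. Let $B$ be a compact oriented orbifold of dimension $n$, stratified as in \eqref{strat.1}. The first point is that the underlying topological space of $B$ is a compact oriented stratified pseudomanifold: its regular part --- the points with trivial isotropy --- is a dense oriented $n$-manifold, and $B$ has no codimension-one stratum. Indeed, a codimension-one stratum through a point $p$ would, by the slice theorem in an orbifold chart, force the isotropy group $I_p$ to act effectively on a one-dimensional normal slice and hence to contain the involution $-1$ on that slice; as an element of the local group acting on $\bbR^n$ this involution is $\mathrm{diag}(-1,1,\dots,1)$, which has determinant $-1$, contradicting orientability.

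Now invoke Goresky--MacPherson duality with real coefficients: for a compact oriented $n$-dimensional pseudomanifold it furnishes, for the complementary lower- and upper-middle perversities, a nondegenerate pairing
\[
 I\!H^{k}_{\underline{\mathfrak{m}}}(B;\bbR)\times I\!H^{n-k}_{\overline{\mathfrak{m}}}(B;\bbR)\longrightarrow\bbR .
\]
Since $\underline{\mathfrak{m}}$ and $\overline{\mathfrak{m}}$ are standard perversities, Theorem~\ref{ico.1} identifies each of these groups with the orbifold de Rham cohomology $H^{*}(\Omega^{*}(B))$, and thus with $H^{*}(B;\bbR)$. This already yields a nondegenerate pairing $H^{k}(B;\bbR)\times H^{n-k}(B;\bbR)\to\bbR$, and in particular the equality $\dim H^{k}(B;\bbR)=\dim H^{n-k}(B;\bbR)$.

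It remains to recognise this pairing as the cup-product pairing against the fundamental class, which is the content of Poincar\'e duality. Transported through the identifications of Theorem~\ref{ico.1}, the natural pairing on $H^{*}(\Omega^{*}(B))$ is $([\alpha],[\beta])\mapsto\int_{B}\alpha\wedge\beta$, which is well defined because $B$ is oriented; since the de Rham isomorphism for orbifolds is multiplicative, this equals $\langle[\alpha]\cup[\beta],[B]\rangle$ with $[B]\in H_{n}(B;\bbR)$ the fundamental class detected by integrating top-degree orbifold forms (the underlying space of an oriented orbifold being a rational homology manifold). Hence the cup-product pairing $H^{k}(B;\bbR)\otimes H^{n-k}(B;\bbR)\to\bbR$, $[\alpha]\otimes[\beta]\mapsto\langle[\alpha]\cup[\beta],[B]\rangle$, is nondegenerate, which is exactly the assertion.

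The main obstacle is this last step: the Goresky--MacPherson pairing is defined by transverse intersection of allowable chains, and matching it --- through the sheaf-theoretic and de Rham identifications used to prove Theorem~\ref{ico.1} --- with the concrete pairing $\int_B\alpha\wedge\beta$ requires care. The cleanest way to finesse it is to bypass the intersection pairing altogether: fix an orbifold Riemannian metric, represent classes in $H^{*}(\Omega^{*}(B))$ by harmonic forms via (standard) orbifold Hodge theory, and note that the Hodge star carries harmonic $k$-forms isomorphically onto harmonic $(n-k)$-forms with $\int_B\alpha\wedge{*}\alpha=\|\alpha\|_{L^{2}}^{2}>0$, so that $\int_B\alpha\wedge\beta$ is perfect by inspection; Theorem~\ref{ico.1} is then used only for the final identification $H^{*}(\Omega^{*}(B))\cong H^{*}(B;\bbR)$.
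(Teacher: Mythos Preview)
Your approach is essentially the same as the paper's: the corollary is stated immediately after Theorem~\ref{ico.1} with the remark that it ``implies'' this known fact, the implicit argument being precisely Goresky--MacPherson duality for intersection cohomology combined with Theorem~\ref{ico.1} identifying $I\!H^*_{\underline{\mathfrak m}}(B)\cong I\!H^*_{\overline{\mathfrak m}}(B)\cong H^*(B;\bbR)$. You supply considerably more detail than the paper does --- the verification that an oriented orbifold has no codimension-one strata, and the discussion of what the duality pairing actually looks like under the de Rham identification --- but the core idea is the same.
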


\section{From weighted cohomology to $L^{2}$ cohomology.}\label{wtl.0}

In this section, we will prove that $L^{2}$ harmonic forms for foliated
cusp (and foliated boundary) metrics is isomorphic to the images of
maps of appropriately weighted cohomology groups.  The main technical
tool in the proof is a parametrix for the Hodge-deRham operator on
differential forms, for which we rely on the thesis of Vaillant
\cite{Vaillant}.  Since the thesis is lengthy and technical, we will
use the description of the parametrix construction in \cite{HHM2004}
as a guide, and in doing to it will become clear why we need the
assumption that $\cF$ is a good Seifert fibration as in Definition
\ref{thm:goodassumption} for the purposes of this section.

Let $D_{\fob}= d + \delta_{\fob}$ be the Hodge-deRham operator
associated to an exact foliated boundary metric $g_{\fob}$ on $M$.  Let
also $D_{\foc}$ denote the Hodge-deRham operator associated to the conformally related exact foliated cusp metric $g_{\foc}= x^2 g_{\fob}$.  To construct parametrices for these operators, we will make use of their restrictions to the tubular neighborhood $\pa M\times [0,\epsilon)_x$.  Denote by $\widetilde{D}_{\fob}:= q^*D_{\fob}$ and 
$\widetilde{D}_{\foc}:= q^*D_{\foc}$ the pull-back of these restrictions under the quotient map $q\colon \pa\widetilde{M}\times [0,\epsilon)_x\to \pa M\times [0,\epsilon)_x$.  Clearly, $\widetilde{D}_{\fob}$ and $\widetilde{D}_{\foc}$ are the Hodge-deRham operators associated to the pull-back metrics $q^*g_{\fob}$ and $q^*g_{\foc}$.  They are $\Gamma$-invariant.  This suggests the parametrix construction of \cite{HHM2004} (see also \cite{Vaillant}) can be used to obtain a parametrix for $D_{\fob}$ and $D_{\foc}$. 

We must first introduce some notation.   All of the various spaces of
differential forms in \cite{HHM2004} have analogues on spaces
with foliated boundaries, which can be defined by
lifting via $q$.  For example, we define the space
$\Omega_{\fob}^{*}(M)$ to be the space of smooth forms on $M$ which, when
restricted to $\p M\times (0,\frac{\epsilon}{2})$, pull
back via $q_{\wt{\pi}}$ to lie in the space $\Omega_{fb}^{*}(\p \wt{M} \times [0,\epsilon)_{x})$ in Section 5.1 of
\cite{HHM2004}.  The same goes for the spaces of conormal
distributions of degree $k$, $\mathcal{A}^{*}\Omega^{k}_{\fob}(M)$ and
polyhomogeneous forms $\mathcal{A}_{phg}^{*}\Omega^{k}_{\fob}(M)$.  We then
have, for example, the complexes
\begin{equation}
 \cdots \xrightarrow{d} x^{a}\mathcal{A}^*\Omega^{k}(g_{\fob}, U) \xrightarrow{d}
  x^{a+1}\mathcal{A}^*\Omega^{k + 1}(g_{\fob}, U) \xrightarrow{d} \cdots.
\end{equation}
and
\begin{equation}
 \cdots \xrightarrow{d} x^{a}\mathcal{A}^*\Omega^{k}(g_{\foc}, U) \xrightarrow{d}
  x^{a}\mathcal{A}^*\Omega^{k + 1}(g_{\foc}, U) \xrightarrow{d} \cdots,
\end{equation}
and as described in section 2.5 of \cite{HHM2004}, the cohomologies of
these two complexes are naturally isomorphic to the corresponding weighted $L^{2}$
cohomologies in \eqref{eq:localweightedcomplex} and
\eqref{eq:localweightedcomplexfob}.  The same is true for the
polyhomogeneous spaces.

Let $\Pi_0$ be the 
family of projections onto the space of harmonic forms for the family
of Hodge-deRham operators $D_{k}$ associated to the family of
metrics $k$ on the fibres of $\widetilde{\pi}\colon \pa \widetilde{M}\to B$.
Let $\Pi_{\perp}$ be the orthogonal complement of this family.
\begin{definition}
We define $x^{a+1}\Pi_0 L^2\Omega_{\fob}^*(M)\oplus
x^a\Pi_{\perp}L^2\Omega^*_{\fob}(M)$ to be the space of currents on
$M$ which are in $L^2$ when restricted to $M\setminus (\pa M\times
[0,\frac{\epsilon}{2}))$ and are such that their restrictions to
$\pa M\times [0,\epsilon)_x$ pull back under the 
quotient map $q_{\wt{\pi}}$ to give elements in the space $x^{a+1}\Pi_0
L^2\Omega_{fb}^*(\pa\widetilde{M}\times [0,\epsilon)_x)\oplus
x^a\Pi_{\perp}L^2\Omega^*_{fb}(\pa\widetilde{M}\times [0,\epsilon)_x)$
defined in \cite{HHM2004}.  The spaces
\begin{equation}
  \begin{gathered}
    x^{a-1}\Pi_0 H^1\Omega_{\fob}^*(M)\oplus
    x^a\Pi_{\perp}H^1\Omega^*_{\fob}(M), \\
    x^{a}\Pi_0
    L^2\Omega_{\foc}^*(M)\oplus
    x^{a-1}\Pi_{\perp}L^2\Omega^*_{\foc}(M) \mbox{ and } \\ x^{a}\Pi_0
    H^1\Omega_{\foc}^*(M)\oplus
    x^{a+1}\Pi_{\perp}H^1\Omega^*_{\fob}(M)
  \end{gathered}
\end{equation}
are defined in a similar way.   
\label{wtl.1}\end{definition}

In order to use the work in Section 5 of \cite{HHM2004} directly, we
must make one additional assumption, namely that the pullback
$q^*g_{\foc}$ is exact, i.e.\ of the form
\eqref{eq:foliatedcuspmet} from the introduction, and that for fixed
$x = \epsilon$, the fibre metric $k$ annihilates the orthocomplement
of $\mbox{T}\wt{F}\subset (\mbox{T}\p \wt{M} , g_{\foc} \rvert_{x =
  \epsilon})$.  As described in \cite[\S~8]{pomfb}, such a metric is easily constructed.

\begin{proposition}
Suppose that $a$ is not an indicial root for $\Pi_0
x^{-1}\widetilde{D}_{\fob}\Pi_0$.  Then 
\[
   D_{\fob}\colon x^aH^1_{\fob}(M)\to x^{a+1}\Pi_0 L^2\Omega_{\fob}^*(M)\oplus x^a\Pi_{\perp}L^2\Omega^*_{\fob}(M)
\]
and 
\[
  D_{\fob}\colon x^{a-1}\Pi_0 H^1\Omega_{\fob}^*(M)\oplus x^a\Pi_{\perp}H^1\Omega^*_{\fob}(M) \to x^aL^2\Omega^*_{\fob}(M)
\]
are Fredholm.  If $D_{\fob}\omega=0$, then $\omega$ is polyhomogeneous
with exponents in its expansion determined by the indicial roots of
$\Pi_0x^{-1}\widetilde{D}_{\fob}\Pi_0$, while if $\eta\in
\cA^a\Omega^*_{\fob}(M)$, $\zeta \in x^{c-1}\Pi_0
H^1\Omega^*_{\fob}\oplus x^c \Pi_{\perp}H^1\Omega^*_{\fob}(M)$ for $c<a$
and $\eta= D_{\fob}\zeta$, then
$\zeta\in \Pi_0\cA^I_{\phg}\Omega^*_{\fob}(M)+ \cA^a\Omega^*_{\fob}(M)$.
\label{wtl.2}\end{proposition}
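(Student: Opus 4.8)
The plan is to pull the problem back to the resolving fibred boundary manifold via the covering map $q$, invoke the parametrix construction of \cite[\S~5]{HHM2004} (see also \cite{Vaillant}) there as a black box, and then descend by averaging over the finite structure group $\Gamma$. Recall from Definition~\ref{tfb.2b} that near $\pa M$ we may write the collar as $\pa M\times[0,\epsilon)_x=(\pa\widetilde M\times[0,\epsilon)_x)/\Gamma$, where $\pa\widetilde M\to\widetilde B$ is a genuine fibration, $\Gamma$ acts freely on $\pa\widetilde M$ (so $q$ is a covering map) and trivially on $[0,\epsilon)_x$. The additional hypothesis imposed just before the statement ensures that $q^{*}g_{\foc}$, and hence $q^{*}g_{\fob}=x^{-2}q^{*}g_{\foc}$, is an exact fibred boundary metric in the normal form required in \cite[\S~5]{HHM2004}, and $\widetilde D_{\fob}=q^{*}D_{\fob}$ is precisely the operator analysed there. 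The two ingredients of the construction of \cite{HHM2004} on the collar — the small-calculus parametrix in the interior and "large" directions, and the boundary parametrix built from the normal operator together with its $\Pi_0$-reduction $\Pi_0 x^{-1}\widetilde D_{\fob}\Pi_0$ — depend only on this collar data, so they are available even though a global manifold $\widetilde M$ is not.

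The first step is to observe that everything in sight is $\Gamma$-equivariant. The $\Gamma$-action on $\pa\widetilde M$ covers the action on $\widetilde B$ and fixes $x$, so it preserves $q^{*}g_{\fob}$, commutes with $\widetilde D_{\fob}$, and — since over the base action it sends fibres of $\pa\widetilde M\to\widetilde B$ isometrically to fibres — commutes with the family of fibrewise harmonic projections $\Pi_0$ and with $\Pi_\perp$. Consequently $\Gamma$ preserves each of the split weighted spaces entering Definition~\ref{wtl.1}, whose $\Gamma$-invariant parts are, by that definition, the spaces on $M$ appearing in the proposition. As $\Gamma$ is finite it acts compatibly on the blown-up double spaces underlying the calculus of \cite{HHM2004}, and $\gamma^{*}$ preserves the relevant conormal and polyhomogeneous spaces; hence if $\widetilde G$ denotes the HHM parametrix for $\widetilde D_{\fob}$ (available because $a$ is not an indicial root of $\Pi_0 x^{-1}\widetilde D_{\fob}\Pi_0$), the average $\widetilde G^{\Gamma}=|\Gamma|^{-1}\sum_{\gamma\in\Gamma}\gamma^{*}\widetilde G\gamma_{*}$ lies in the same calculus, is $\Gamma$-equivariant, and still inverts $\widetilde D_{\fob}$ modulo a compact error. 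It therefore descends to a parametrix $G_\partial$ for $D_{\fob}$ on the collar. Patching $G_\partial$ with a standard interior elliptic parametrix by means of cutoffs produces a global parametrix $G$ on $M$ with $D_{\fob}G-\mathrm{Id}$ and $GD_{\fob}-\mathrm{Id}$ compact on the weighted spaces in the statement (the commutator and overlap terms are supported away from $\pa M$ and are compact, and the interior piece contributes a compactly supported smoothing operator), which yields the two Fredholm assertions.

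The regularity statements follow by the same pullback/descent. If $D_{\fob}\omega=0$, then on the collar $q^{*}\omega$ solves $\widetilde D_{\fob}(q^{*}\omega)=0$, so by \cite[\S~5]{HHM2004} it is polyhomogeneous with exponents controlled by the indicial roots of $\Pi_0 x^{-1}\widetilde D_{\fob}\Pi_0$; since $q$ is a covering preserving $x$, this conormal and polyhomogeneous structure — and the set of exponents — passes to $\omega$ itself, while interior elliptic regularity handles $\omega$ away from $\pa M$. Likewise, if $\eta\in\cA^{a}\Omega^{*}_{\fob}(M)$ and $\zeta\in x^{c-1}\Pi_0 H^{1}\Omega^{*}_{\fob}(M)\oplus x^{c}\Pi_\perp H^{1}\Omega^{*}_{\fob}(M)$ with $c<a$ and $\eta=D_{\fob}\zeta$, then $q^{*}\eta$, $q^{*}\zeta$ satisfy the corresponding relation on $\pa\widetilde M\times[0,\epsilon)_x$, the conclusion of \cite[\S~5]{HHM2004} applies there, and it descends under $q$; the two summands $\Pi_0\cA^{I}_{\phg}\Omega^{*}_{\fob}$ and $\cA^{a}\Omega^{*}_{\fob}$ are $\Gamma$-equivariant, so the splitting is respected in the descent.

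The step that will require the most care is the equivariance bookkeeping: checking that $\Pi_0$ is genuinely $\Gamma$-equivariant (this uses that $\Gamma$ acts on the fibres of $\pa\widetilde M\to\widetilde B$ by isometries over the base action), and that $\gamma^{*}$ preserves the operator calculus, the conormal spaces, and the compactness of the error terms, so that averaging over $\Gamma$ keeps us inside the machinery of \cite{HHM2004}. The remaining point — that only the collar of $M$, not $M$ itself, is a global $\Gamma$-quotient, so the boundary-model parametrix must be glued to an interior parametrix — is routine. In particular, no new analytic input beyond \cite{HHM2004} is required, which is the purpose of having introduced the resolution $q$.
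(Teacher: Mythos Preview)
Your proof is correct and follows essentially the same route as the paper: construct the HHM parametrix for $\widetilde D_{\fob}$ on the collar cover, average over $\Gamma$ so it descends, patch with an interior parametrix, and deduce both Fredholmness and the regularity statements from the resulting global parametrix. Your write-up is in fact more explicit than the paper's about the $\Gamma$-equivariance bookkeeping (in particular that $\Pi_0$ commutes with the $\Gamma$-action and that the calculus is preserved under averaging) and about why only the collar, not all of $M$, needs to be resolved.
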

\begin{proof}
When the foliation $\cF$ is a fibration, this is just
\cite[Proposition~16]{HHM2004}.  To obtain the result in the foliated
case, we can apply the construction of \cite{HHM2004} to the operator
$\widetilde{D}_{\fob}$ near $\pa\widetilde{M}$ to obtain a parametrix
$\widetilde{G}_{\fob}$.  By averaging with $\Gamma$ if necessary, we
can assume $\widetilde{G}_{\fob}$ is $\Gamma$-invariant.  This means
(see for instance \cite{pomfb}) that $\widetilde{G}_{\fob}$ descends to
an operator $q_*\widetilde{G}_{\fob}$ on $\pa M\times [0,\epsilon)_x$.
This can be combined with 
a parametrix of $D_{\fob}$ in the interior of $M$ to obtain a global
parametrix $G_{\fob}$ of $D_{\fob}$, from which the result follows.  
\end{proof}

Using a similar approach, we also obtained the following generalization of \cite[Proposition~17]{HHM2004}.

\begin{proposition}
Suppose that $a$ is not an indicial root for $\Pi_0 \widetilde{D}_{\foc}\Pi_0$.  Then 
\[
   D_{\foc}\colon x^aH^1_{\foc}(M)\to x^{a}\Pi_0 L^2\Omega_{\foc}^*(M)\oplus x^{a-1}\Pi_{\perp}L^2\Omega^*_{\foc}(M)
\]
is Fredholm.  If $a+1$ is not an indicial root, then
\[
  D_{\foc}\colon x^{a}\Pi_0 H^1\Omega_{\foc}^*(M)\oplus x^{a+1}\Pi_{\perp}H^1\Omega^*_{\foc}(M) \to x^aL^2\Omega^*_{\foc}(M)
\]
is Fredholm.  If $D_{\foc}\omega=0$, then $\omega$ is polyhomogeneous
with exponents in its expansion determined by the indicial roots of
$\Pi_0\widetilde{D}_{\foc}\Pi_0$, while if $\eta\in
\cA^a\Omega^*_{\foc}(M)$, $\zeta \in x^{c}\Pi_0 H^1\Omega^*_{\foc}\oplus
x^{c+1} \Pi_{\perp}H^1\Omega^*_{\foc}(M)$ for $c<a$ and $\eta=
D_{\foc}\zeta$, then
$\zeta\in \Pi_0\cA^I_{\phg}\Omega^*_{\foc}(M)+ \cA^a\Omega^*_{\foc}(M)$.
\label{wtl.3}\end{proposition}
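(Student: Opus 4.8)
The plan is to mimic exactly the strategy used in the proof of Proposition~\ref{wtl.2}, transferring \cite[Proposition~17]{HHM2004} from the fibred case to the foliated case via the good Seifert structure. Recall that a good Seifert fibration means $\pa M = \pa\wt{M}/\Gamma$ where $\pa\wt{M}$ carries a genuine fibration $\wt\pi\colon\pa\wt M\to\wt B$ and $\Gamma$ is a finite group of diffeomorphisms acting freely. Since the pullback metric $q^*g_{\foc}$ is assumed exact in the sense of \eqref{eq:foliatedcuspmet} (and adapted so that $k$ annihilates the orthocomplement of $\mathrm{T}\wt F$ at $x=\epsilon$), the operator $\wt D_{\foc} = q^*D_{\foc}$ on $\pa\wt M\times[0,\epsilon)_x$ is precisely a Hodge--deRham operator of the fibred cusp type treated in Section~5 of \cite{HHM2004}, and it is moreover $\Gamma$-invariant.

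First I would invoke \cite[Proposition~17]{HHM2004} applied to $\wt D_{\foc}$ to produce a parametrix $\wt G_{\foc}$ for $\wt D_{\foc}$ near $\pa\wt M$, valid under the stated hypotheses that $a$ (and, for the second mapping, $a+1$) is not an indicial root of $\Pi_0\wt D_{\foc}\Pi_0$; note that the indicial roots of $\Pi_0\widetilde D_{\foc}\Pi_0$ coincide with those of $\Pi_0 D_{\foc,\mathrm{restricted}}\Pi_0$ since $q$ is a finite covering near the boundary and the model operator descends. Next, averaging $\wt G_{\foc}$ over the finite group $\Gamma$ (using $\Gamma$-invariance of $\wt D_{\foc}$ to see that the averaged operator is still a parametrix with the same error orders), I obtain a $\Gamma$-invariant parametrix, which therefore descends under $q$ to an operator $q_*\wt G_{\foc}$ on $\pa M\times[0,\epsilon)_x$. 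Then, patching $q_*\wt G_{\foc}$ with an interior parametrix of $D_{\foc}$ on $M$ by a standard partition-of-unity construction gives a global parametrix $G_{\foc}$ for $D_{\foc}$ with compact error, hence Fredholmness of both mapping statements on the indicated weighted Sobolev spaces $x^aH^1_{\foc}(M)\to x^{a}\Pi_0 L^2\Omega^*_{\foc}(M)\oplus x^{a-1}\Pi_\perp L^2\Omega^*_{\foc}(M)$ and the dual-type one. Finally, the regularity/polyhomogeneity assertions — that $D_{\foc}\omega=0$ forces $\omega$ polyhomogeneous with exponents governed by the indicial roots of $\Pi_0\wt D_{\foc}\Pi_0$, and that a primitive $\zeta$ of $\eta\in\cA^a\Omega^*_{\foc}(M)$ with $\zeta$ in the lower-weight space lies in $\Pi_0\cA^I_{\phg}\Omega^*_{\foc}(M)+\cA^a\Omega^*_{\foc}(M)$ — follow because these are local statements near $\pa M$, hold upstairs on $\pa\wt M$ by \cite[Proposition~17]{HHM2004}, and descend under the finite quotient map $q$ (a $\Gamma$-invariant polyhomogeneous form on $\pa\wt M\times[0,\epsilon)_x$ pushes forward to a polyhomogeneous form on $\pa M\times[0,\epsilon)_x$ with the same index set).

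The main obstacle, as in Proposition~\ref{wtl.2}, is verifying that the construction of Section~5 of \cite{HHM2004} genuinely applies verbatim to $\wt D_{\foc}$ — i.e. checking that the normal-operator analysis, the mapping properties of the model parametrix in the $\Pi_0$ and $\Pi_\perp$ pieces, and the precise bookkeeping of weights (the shift by $1$ on the $\Pi_\perp$ summand reflecting the cusp scaling) all carry over. This is essentially routine once one observes that $\wt D_{\foc}$ is literally a fibred cusp Hodge--deRham operator in their sense, so I would state this as an application of their Proposition~17 rather than reproving it; the only genuinely new point is the averaging/descent argument under $\Gamma$, which is the same one already used in Proposition~\ref{wtl.2} and in \cite{pomfb}. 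Thus the proof is short: apply \cite[Proposition~17]{HHM2004} upstairs, average over $\Gamma$, push forward, and patch with an interior parametrix.
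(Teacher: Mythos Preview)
Your proposal is correct and follows essentially the same approach as the paper: apply \cite[Proposition~17]{HHM2004} to $\wt D_{\foc}$ on $\pa\wt M\times[0,\epsilon)_x$, average the resulting parametrix over $\Gamma$ so that it descends via $q_*$, and patch with an interior parametrix to obtain the Fredholm and regularity statements. Indeed, the paper does not write out a separate proof for this proposition but simply remarks that the argument of Proposition~\ref{wtl.2} carries over with \cite[Proposition~17]{HHM2004} in place of \cite[Proposition~16]{HHM2004}, which is exactly what you have spelled out.
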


With these results, we can now relate $L^2$-harmonic forms with weighted $L^2$-cohomology.

\begin{theorem}
If $(M,g_{\fob})$ is a manifold with a foliated boundary metric such
that the boundary foliation is a good Seifert fibration (see Definition~\ref{tfb.2b}), then
for every $k$ there is a natural isomorphism
\begin{equation}
   \Psi\colon L^2\mathcal{H}^k(M, g_{\fob})\to
   \Im(\mathcal{W}\!H^k(M,g_{\fob},\epsilon_0)\to \mathcal{W}\!H^k(M,
   g_{\fob},-\epsilon_0))
\label{wtl.4b}\end{equation}
for $\epsilon_0>0$ sufficiently small.
\label{wtl.4}\end{theorem}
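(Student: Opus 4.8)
The plan is to follow the argument of \cite[\S5]{HHM2004} essentially verbatim, replacing the fibred boundary parametrix there with the $\Gamma$-equivariant parametrix $G_{\fob}$ produced in Proposition~\ref{wtl.2}. First I would recall the Kodaira-type decomposition: an $L^2$ harmonic form $\omega\in L^2\mathcal{H}^k(M,g_{\fob})$ satisfies $d\omega=0$ and $\delta_{\fob}\omega=0$, hence $D_{\fob}\omega=0$, and by the second sentence of Proposition~\ref{wtl.2} it is polyhomogeneous with exponents governed by the indicial roots of $\Pi_0x^{-1}\widetilde{D}_{\fob}\Pi_0$. The point is that, because of the square-root spacing of the relevant indicial roots (this is where the hypothesis that the Seifert fibration is \emph{good} enters, via the reduction to the fibration $\wt{\pi}$), the leading exponent of $\omega$ lies in the open interval $(-\epsilon_0,\epsilon_0)$ for $\epsilon_0$ small, so that $\omega$ simultaneously represents a class in $x^{\epsilon_0}L^2$-cohomology and in $x^{-\epsilon_0}L^2$-cohomology. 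Thus $\omega$ is closed in $x^{\epsilon_0}L_d^2\Omega^k(g_{\fob},M)$ and defines $[\omega]\in \mathcal{W}\!H^k(M,g_{\fob},\epsilon_0)$, whose image under the natural map to $\mathcal{W}\!H^k(M,g_{\fob},-\epsilon_0)$ is well defined; this assignment is $\Psi$.

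Next I would check that $\Psi$ lands in the image of the forgetful map --- which is immediate from the construction above --- and then prove injectivity and surjectivity. For injectivity: if $[\omega]=0$ in $\mathcal{W}\!H^k(M,g_{\fob},-\epsilon_0)$, then $\omega=d\zeta$ with $\zeta\in x^{-\epsilon_0-1}\Pi_0H^1\Omega^*_{\fob}(M)\oplus x^{-\epsilon_0}\Pi_{\perp}H^1\Omega^*_{\fob}(M)$; applying $\delta_{\fob}$ and using $\delta_{\fob}\omega=0$ together with the Fredholm/regularity statement of Proposition~\ref{wtl.2} (specifically the last clause, which upgrades a primitive living at weight $c<a$ to a polyhomogeneous one), one shows $\omega=d\delta_{\fob}G\omega$ is both exact and coexact, hence by integration by parts $\omega=0$; the boundary terms vanish because the weights are arranged so that the pairing of $\zeta$ against $\omega$ decays. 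For surjectivity: given $[\eta]\in\mathcal{W}\!H^k(M,g_{\fob},\epsilon_0)$ mapping to $[\eta']\in\mathcal{W}\!H^k(M,g_{\fob},-\epsilon_0)$, one uses the Fredholm operator $D_{\fob}\colon x^{-\epsilon_0-1}\Pi_0H^1\oplus x^{-\epsilon_0}\Pi_\perp H^1\to x^{-\epsilon_0}L^2$ and the fact that $\epsilon_0,-\epsilon_0\pm1$ avoid indicial roots to solve $D_{\fob}\psi = $ (projection of $\eta$), subtract $d\psi_k+\delta_{\fob}\psi_{k+1}$, and land on a genuine harmonic representative; the absence of indicial roots in the relevant windows guarantees that the correction term improves the decay from $x^{\epsilon_0}$ to something that makes the difference $L^2$ and harmonic.

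The only genuinely new content beyond \cite{HHM2004} is the descent under $q$: all the mapping properties are first established $\Gamma$-equivariantly on $\pa\widetilde{M}\times[0,\epsilon)_x$ exactly as in \cite{HHM2004}, the parametrix and projections $\Pi_0,\Pi_\perp$ are averaged to be $\Gamma$-invariant, and then one invokes that a $\Gamma$-invariant operator on $\pa\widetilde{M}\times[0,\epsilon)_x$ descends to $\pa M\times[0,\epsilon)_x$ (as already used in Proposition~\ref{wtl.2}), while $\Gamma$-invariant polyhomogeneous forms descend to polyhomogeneous forms on the quotient with the same leading exponents. Since the indicial roots of $\Pi_0x^{-1}\widetilde{D}_{\fob}\Pi_0$ are unchanged by this quotient, the weight windows used in \cite{HHM2004} work verbatim. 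I expect the main obstacle to be purely bookkeeping: matching up the four hybrid weighted spaces of Definition~\ref{wtl.1} with the domains and targets appearing in Proposition~\ref{wtl.2}, and verifying that with $a=\pm\epsilon_0$ (and the shifted weights $\pm\epsilon_0\pm 1$ entering through the $\Pi_0$/$\Pi_\perp$ splitting) none of these equal an indicial root --- i.e.\ checking that $\epsilon_0$ can be chosen small enough to dodge the finitely many indicial roots relevant in each degree --- after which the argument of \cite[\S5]{HHM2004} applies word for word.
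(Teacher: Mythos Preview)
Your proposal is correct and follows the same strategy as the paper: define $\Psi$ via the polyhomogeneity statement in Proposition~\ref{wtl.2}, prove injectivity by an integration-by-parts computation of $\|\omega\|^2$ whose boundary term is shown to vanish after lifting to $\pa\widetilde{M}\times[0,\epsilon)_x$, and prove surjectivity by writing $\eta=D_{\fob}\zeta+\gamma$ via the cokernel identification and then showing $\delta_{\fob}\zeta=0$. Two small points of precision: for injectivity the paper does not pass through any identity like $\omega=d\delta_{\fob}G\omega$ but simply computes $\|\omega\|^2=\lim_{x\to 0}\frac{1}{|\Gamma|}\int_{\pa\widetilde{M}\times\{x\}}q^*\zeta\wedge *q^*\omega$ and cites \cite{HHM2004}; for surjectivity the essential step (which you gloss over) is the decomposition $\zeta=\zeta_0+\zeta'$ with $\zeta_0$ of the explicit form $x^{(b-1)/2}(\alpha/x^{(b-1)/2}+\frac{dx}{x^2}\wedge\beta/x^{(b+1)/2})$, $q^*\alpha,q^*\beta\in\ker\mathbb{D}$, which is what makes the integration by parts for $\|\delta_{\fob}\zeta\|^2=0$ go through.
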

\begin{proof}
Thanks to Proposition~\ref{wtl.2}, the proof is a straightforward
adaptation of the proof of \cite[Theorem~1C]{HHM2004}.  First, if
$\omega\in L^2\mathcal{H}^k(M, g_{\fob})$, then by Proposition~\ref{wtl.2}, we
know $\omega$ is polyhomogeneous, so in particular is in
$x^{\epsilon_0}L^2\Omega^k_{\fob}(M)$ for some small $\epsilon_0>0$,
specifically any $\epsilon_{0}$ smaller than the smallest indicial root
of the Hodge-deRham operator.
This gives the mapping \eqref{wtl.4b}.  If $\Psi(\omega)=0$, then
$\omega= d\zeta$ for some $\zeta\in
x^{-1 -\epsilon_0}L^2\Omega^{k-1}_{\fob}(M)$.  As described above, we can
choose $\zeta$ to
be polyhomogeneous.   Then
\begin{equation}
\|\omega\|^2= \int_M d\zeta\wedge * \omega= \int_M d(\zeta\wedge *\omega) =
\lim_{x\to0} \int_{\pa M\times \{x\}}\zeta\wedge *\omega = 
\frac{1}{\absv{\Gamma}} \lim_{x\to0} \int_{\pa M\times
    \{x\}}q^{*}\zeta\wedge * q^{*}\omega.
\label{wtl.5}\end{equation}
The latter goes to zero exactly as shown in \cite{HHM2004}.

To establish surjectivity, we note that thanks to Proposition~\ref{wtl.2}, the space of $L^2$-harmonic forms $L^2\mathcal{H}^*(M)$ can be identified with the cokernel of the map
\[
  D_{\fob}\colon x^{-\epsilon_0-1} \Pi_0 H^1 \Omega^*_{\fob}(M) \oplus x^{-\epsilon_0}\Pi_{\perp} H^1\Omega^*_{\fob}(M)\to x^{-\epsilon_0}L^2\Omega^*_{\fob}(M)
\] 
for $\epsilon_0>0$ small enough.  This means that
$x^{-\epsilon_0}L^2\Omega^*_{\fob}(M)$ is equal to
\begin{equation}
 \Im\left( D_{\fob}\colon x^{-\epsilon_0-1} \Pi_0 H^1 \Omega^*_{\fob}(M) \oplus x^{-\epsilon_0}\Pi_{\perp} H^1\Omega^*_{\fob}(M)\to x^{-\epsilon_0}L^2\Omega^*_{\fob}(M)\right) \oplus L^2\mathcal{H}^*(M).
 \label{wtl.6}\end{equation}
Thus, suppose $\eta\in x^{\epsilon_0}L^2\Omega^k_{\fob}(M)$ is a
representative for a class in the space on the right of
\eqref{wtl.4b}.  Again, we
choose $\eta$ to be polyhomogeneous.  According to \eqref{wtl.6}, the
form $\eta$ can be rewritten as
\[
  \eta= D_{\fob}\zeta+ \gamma, \quad \zeta\in x^{-\epsilon_0-1} \Pi_0 H^1 \Omega^*_{\fob}(M) \oplus x^{-\epsilon_0}\Pi_{\perp} H^1\Omega^*_{\fob}(M),
  \; \gamma\in L^2\mathcal{H}^*(M).
\]
Restricting this identity to the collar neighborhood $\pa M\times [0,\epsilon)_x$ and pulling it back to $\pa\widetilde{M}\times [0,\epsilon)_x$ via the quotient map $q$, we use the same argument as in \cite{HHM2004} to conclude that 
\[
  \zeta= \zeta_0 + \zeta' \; \mbox{with} \; \zeta'\in \cA^{\epsilon_0}\Omega^*_{\fob}(M) \; \mbox{and} \; \zeta_0\in x^{\frac{b-1}{2}} \left( \frac{\alpha_{\frac{b-1}{2}}}{x^{\frac{b-1}{2}}} + \frac{dx}{x^2}\wedge \frac{\beta_{\frac{b+1}{2}}}{x^{\frac{b+1}{2}}} \right),
\] 
where $q^*\alpha_{\frac{b-1}{2}}, q^* \beta_{\frac{b+1}{2}} \in \ker \mathbb{D}$.  Here, $\mathbb{D}$ is the operator defined in \cite{HHM2004} for the metric $q^*g_{\fob}$.  This can be used to show, as in \cite{HHM2004}, that $\|\delta \zeta\|^2=0$ using integration by parts.  Thus, $\delta\zeta=0$ and consequently we have that
\[
    \eta= d\zeta + \gamma.
\]  
This shows that the cohomology class represented by $\eta$ is in the image of the map \eqref{wtl.4b}, establishing surjectivity.  

\end{proof}

Using Proposition~\ref{wtl.3} instead of Proposition~\ref{wtl.2} and adapting the proof of \cite[Theorem~2C]{HHM2004} in a straightforward manner, we obtain also the following.  
\begin{theorem}\label{thm:hodgetoweightedc}
If $(M,g_{\foc})$ is a manifold with a foliated cusp metric such that
the boundary foliation is a good Seifert fibration (see Definition~\ref{tfb.2b}), then for
every $k$ there is a natural isomorphism
\begin{equation}
   L^2\mathcal{H}^k_{\foc}(M)\to \Im(W\!H^k(M,g_{\foc},\epsilon_0)\to W\!H^k(M, g_{\foc},-\epsilon_0))
\label{wtl.7b}\end{equation}
for $\epsilon_0>0$ sufficiently small.
\label{wtl.7}\end{theorem}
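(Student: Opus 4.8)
The plan is to mimic the proof of Theorem~\ref{wtl.4} step for step, substituting Proposition~\ref{wtl.3} for Proposition~\ref{wtl.2} throughout; this is exactly the adaptation that takes one from \cite[Theorem~1C]{HHM2004} to \cite[Theorem~2C]{HHM2004}, carried out $\Gamma$-equivariantly on the resolution $\pa\widetilde{M}\times[0,\epsilon)_x$ and then pushed down to $M$. Throughout one uses that, by the additional hypothesis stated before Proposition~\ref{wtl.2}, the metric $q^*g_{\foc}$ is exact, so that the parametrix construction, the normal operator, the operator $\mathbb{D}$, and the indicial-root analysis of \cite{HHM2004} apply verbatim to $q^*g_{\foc}$, while $\Gamma$-averaging --- which commutes with $d$, $\delta$, $\Pi_0$ and $\Pi_\perp$ --- transfers the conclusions down to $(M,g_{\foc})$.

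First I would fix $\epsilon_0>0$ small enough that none of $0$, $-\epsilon_0$, $-\epsilon_0+1$ is an indicial root of $\Pi_0\widetilde{D}_{\foc}\Pi_0$, and construct \eqref{wtl.7b} as follows: given $\omega\in L^2\mathcal{H}^k_{\foc}(M)$, Proposition~\ref{wtl.3} shows $\omega$ is polyhomogeneous with exponents governed by the indicial roots, hence $\omega\in x^{\epsilon_0}L^2\Omega^k_{\foc}(M)$ and, being closed, determines a class in $W\!H^k(M,g_{\foc},\epsilon_0)$ whose image in $W\!H^k(M,g_{\foc},-\epsilon_0)$ automatically lies in the target of \eqref{wtl.7b}; independence of the parametrix choices yields naturality. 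For injectivity, suppose that class is zero; then $\omega=d\zeta$ with $\zeta\in x^{-\epsilon_0}L^2\Omega^{k-1}_{\foc}(M)$, which by the last clause of Proposition~\ref{wtl.3} may be taken polyhomogeneous in $\Pi_0\cA^I_{\phg}\Omega^*_{\foc}(M)+\cA^{\epsilon_0}\Omega^*_{\foc}(M)$, and exactly as in \eqref{wtl.5}
\[
\|\omega\|^2=\int_M d\zeta\wedge *\omega=\int_M d(\zeta\wedge *\omega)=\lim_{x\to0}\int_{\pa M\times\{x\}}\zeta\wedge *\omega=\frac{1}{\absv{\Gamma}}\lim_{x\to0}\int_{\pa\widetilde{M}\times\{x\}}q^*\zeta\wedge * q^*\omega ,
\]
where the right-hand limit vanishes by the boundary-term estimate in the proof of \cite[Theorem~2C]{HHM2004} applied to the $\Gamma$-invariant data upstairs; hence $\omega=0$.

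For surjectivity I would first use Proposition~\ref{wtl.3} to identify $L^2\mathcal{H}^*_{\foc}(M)$ with the cokernel of the Fredholm operator
\[
D_{\foc}\colon x^{-\epsilon_0}\Pi_0 H^1\Omega^*_{\foc}(M)\oplus x^{-\epsilon_0+1}\Pi_\perp H^1\Omega^*_{\foc}(M)\lra x^{-\epsilon_0}L^2\Omega^*_{\foc}(M),
\]
so that $x^{-\epsilon_0}L^2\Omega^*_{\foc}(M)=\Im D_{\foc}\oplus L^2\mathcal{H}^*_{\foc}(M)$. Given a closed $\eta\in x^{\epsilon_0}L^2\Omega^k_{\foc}(M)$ representing an element of the target of \eqref{wtl.7b}, chosen polyhomogeneous, I would write $\eta=D_{\foc}\zeta+\gamma$ with $\gamma\in L^2\mathcal{H}^k_{\foc}(M)$ and $\zeta$ in the domain above, then restrict to the collar, pull back via $q$, and decompose $\zeta=\zeta_0+\zeta'$ with $\zeta'\in\cA^{\epsilon_0}\Omega^*_{\foc}(M)$ and $\zeta_0$ a finite sum of leading terms whose coefficients pull back into $\ker\mathbb{D}$ (the analogue for $q^*g_{\foc}$ of the expansion in \cite[Theorem~2C]{HHM2004}). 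Integration by parts against this structure, as in \cite{HHM2004}, then forces $\|\delta\zeta\|^2=0$, hence $\delta\zeta=0$ and $D_{\foc}\zeta=d\zeta$; since $d\zeta$ is exact in the $x^{-\epsilon_0}$ complex, $\eta$ and $\gamma$ carry the same class in $W\!H^k(M,g_{\foc},-\epsilon_0)$, which therefore equals $\Psi(\gamma)$, establishing surjectivity.

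The hard part, precisely as in \cite{HHM2004}, is the asymptotic analysis in the surjectivity step --- extracting the correct leading terms of the primitive $\zeta$ and checking they may be chosen $\delta$-closed; everything else is bookkeeping. The only new ingredient is equivariance: one must check that the $\Gamma$-averaging producing the parametrix $\widetilde{G}_{\foc}$ and the associated harmonic projections does not disturb the leading-order expansion, which holds because $\Gamma$ acts isometrically for $q^*g_{\foc}$ and so commutes with $\mathbb{D}$, $\Pi_0$, $\Pi_\perp$ and preserves the set of indicial roots.
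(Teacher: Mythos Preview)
Your proposal is correct and follows exactly the approach the paper indicates: the paper's own proof of Theorem~\ref{wtl.7} is the single sentence ``Using Proposition~\ref{wtl.3} instead of Proposition~\ref{wtl.2} and adapting the proof of \cite[Theorem~2C]{HHM2004} in a straightforward manner, we obtain also the following,'' and you have faithfully spelled out that adaptation, mirroring the structure of the proof of Theorem~\ref{wtl.4} with the appropriate substitutions for the cusp case.
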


\section{Proofs of main theorems}\label{sec:proofs}

Exactly as in \cite[\S 5.5]{HHM2004}, Theorems \ref{thm:main1} and \ref{thm:main2} follow directly from 
Theorem \ref{thm:wcihfol} in light of Theorems
\ref{wtl.4} and \ref{wtl.7}.  For example, for foliated cusp metrics, combining \eqref{wtl.7b} and
\eqref{eq:localequivweighted} gives
\begin{equation}
  \label{eq:7}
   L^2\mathcal{H}^k_{\foc}(M) \simeq
   \Im(I\!H^k_{[\epsilon + f/2]}(M)\to I\!H^k_{[- \epsilon + f/2]}(M) ),
\end{equation}
for $\epsilon > 0$ sufficiently small.
These are exactly the lower and upper middle perversities,
\[
\underline{\mathfrak{m}}(f+1)=\left\{ \begin{array}{ll}
\frac{f-1}{2} & f \ \ \mbox{odd},\\ \frac{f}{2}   & f \ \ \mbox{even},
\end{array} \right.
\qquad
\overline{\mathfrak{m}}(f+1)=\left\{ \begin{array}{ll}
\frac{f-1}{2} & f \ \ \mbox{odd},\\
\frac{f}{2}-1  & f \ \ \mbox{even}.
\end{array} \right. 
\]
The proof of Theorem \ref{thm:main1} is similar using
\eqref{eq:fobequivfoc}.

For Theorem~\ref{Witt.1},
recall that  the Witt condition is simply
the assumption that for any
link $L$ in a regular neighborhood, if $L$ is even dimensional, then the intersection cohomology
\begin{equation}
I\!H^{\dim L / 2}_{\overline{m}}(L) = 0.\label{eq:Witt3}
\end{equation}
In our case, a necessary and sufficient condition to insure that the space $X$ is Witt is to require that
$H^{\frac{f}{2}}(\widetilde{F})=0$ (if $f$ is even).  Indeed, the condition is necessary since, thanks to the fact the base $B$ is chosen to be an effective orbifold, the link of the regular neighborhoods is $\tF$ for most of the points on $B$.  To see that the condition is sufficient, notice that  
if $L$ is even and $H^{\frac{f}{2}}(\widetilde{F})=0$, then \eqref{eq:4} and the map
$I\!H^{\dim L / 2}_{\overline{m}}(L)  \hookrightarrow I\!H^{\dim L /
  2}_{\overline{m}}(\wt{L})$ from \eqref{eq:square} induces an  injective map $I\!H^{\dim L / 2}_{\overline{m}}(L) \hookrightarrow
I\!H^{\dim L / 2}_{\overline{m}}(C_{1}(\wt{F}))$, and  $I\!H^{\dim L / 2}_{\overline{m}}(C_{1}(\wt{F}))$ is easily seen to be $0$ by Proposition \ref{thm:localtoglobal} and the fact $\dim L\ge f$.
Therefore, since either $f$ is odd or $H^{\frac{f}{2}}(\tF)=0$ in the Witt case, we can just apply Theorem~\ref{thm:wcihfol} with $a=0$ to obtain $H^*_{(2)}(M,g_{\foc}) \cong I\!H^*_{\underline{\mathfrak{m}}}(X,B)$.  By Corollary~\ref{dclosed.1}, we also know that the differential in the $L^2$-complex is closed, so that the $L^2$-cohomology is also naturally identified with the space of $L^2$-harmonic forms, completing the proof of Theorem~\ref{Witt.1}.  In particular, notice that this does not require the Seifert fibration $\cF$ to be good.

\section{Some examples}

We will now verify Theorem \ref{thm:main1} directly for a simple example: a quotient of a
multi-Taub-NUT space.  These are asymptotically locally flat (ALF)
gravitational instantons of type $A_{k-1}$
first written down in \cite{GH1978}.  Such a manifold, $\wt{M}$, contains
$k$ points $p_{1}, \dots,
p_{k}$ such that $\wt{M} - \set{ p_{1}, \dots, p_{k}}$  is diffeomorphic to
a principle $S^{1}$ bundle over
$\mathbb{R}^{3} - \set{z_{1}, \dots, z_{k}}$ of degree $-1$ near each
$z_{i}$.  The metric is locally of the form
\begin{equation*}
  g_{ALF}^{k} = V \lp dx_{1}^{2} + dx_{2}^{2} + dx_{3}^{2}\rp +
  V^{-1} \eta^{2},
\end{equation*}
where $\eta$ is a choice of connection 1-form for the principle $S^1$ bundle with curvature given by $ * d V$, and 
\begin{equation*}
  V(z) = 1 + \sum_{i = 1}^{k} \frac{1}{\absv{z - z_{i}}}.
\end{equation*}
We will assume that the $z_{i}$ lie  at the
points $(\cos(2\pi / k), \sin(2\pi / k), 0)$. The map defined by
\begin{equation*}
(x_{1}, x_{2}, x_{3}) \mapsto
(\cos(2\pi / k) x_{1}, \sin(2\pi / k) x_{2}, x_{3})
\end{equation*}
generates an action of $\bbZ_k\subset \SO(3)$ on $\bbR^3$ which acts freely on the monopole points $z_1, \ldots, z_k$.  As explained in \cite[p.106]{Wright2012}, see also \cite[Proposition~2.7]{Honda-Viaclovsky2010}, this action of $\bbZ_k$ can be lifted to an action on $\wt{M}$ by isometries.  The lifted action is not unique, but it is once we specify the action of $\bbZ_k$ on the $\bbS^1$ fiber above the origin in $\bbR^3$, see again \cite[p.106]{Wright2012}.  In order for $\bbZ_k$ to act freely on $\wt{M}$, we will require that the action on the $\bbS^1$ fibre above the origin is the one were the generator $1\in\bbZ_k$ acts by multiplication by $e^{\frac{2\pi i}{k}}$.  

For this free action of $\bbZ_k$ by isometries on $\wt{M}$, we will now verify explicitly that Corollary~\ref{thm:fbsphere}
holds for $$M := \wt{M}/\bbZ_{k}$$ in degree $2$.  That is, we will
show that if $X$ is the orbifold obtained as in \eqref{eq:X} from $\overline{M}$ by collapsing the boundary
foliation over the space of leaves, then
\begin{equation}\label{eq:goal}
L^{2}\mathcal{H}^{2}(M)
\simeq H^{2}(X).
\end{equation}

As pointed out in  in \cite{R1986}, the $2$-forms 
\begin{equation*}
\Omega_i= *dV_i-d\left(\frac{V_i}{V}\eta\right), 
\qquad V_i=\frac{2m}{|x-p_i|},  \quad i\in\{1,\ldots,k \},
\end{equation*}
give a basis of  the
space $L^{2}\mathcal{H}^{2}(\wt{M})$.   
The space $L^{2}\mathcal{H}^{2}(M)$ is isomorphic to the space
of $\bbZ_{k}$-invariant $L^{2}$ harmonic forms on $\wt{M}$, which is the
one-dimensional space generated by $\sum_{i = 1}^{k}\Omega_{i}$.  Thus
$L^{2}\mathcal{H}^{2}(M) \simeq \mathbb{R}$.

Now we want to compute $H^{2}(X)$.    As described in \cite{H1979}, the inverse
image via $\wt{M} - \set{p_{1}, \dots, p_{k}}\lra \mathbb{R}^{3} -
\set{z_{1}, \dots, z_{k}}$ of the straight line connecting $z_{i}$ to
$z_{j}$ is a nontrivial class in $H_{2}(\wt{M}) \simeq \mathbb{R}^{k -
  1}$.  Let $\gamma_{i}, i = 1, \dots, k - 1,$ denote the Poincar\'e
dual of the class
corresponding to the line connecting $z_{i}$ to $z_{i + 1}$, and
$\gamma_{k}$ that for the line from $z_{k}$ to $z_{1}$.  Then
$\gamma_{1}, \dots, \gamma_{k - 1}$ form a basis for $H^{2}(\wt{M})$, and,
if $k > 2$ the
intersections numbers are
\begin{align*}
  \gamma_{i}^{2} &= -2, \\
  \gamma_{i}\cdot\gamma_{j} &= 1 \mbox{ if } i - j \equiv \pm 1 \mbox{ (mod
    $k$),
  }  \\
  \gamma_{i}\cdot\gamma_{j} &= 0 \mbox{ otherwise. }
\end{align*}
From this, one can easily check that the intersection pairing is
negative definite on $H^{2}(\wt{M})$ and that
\begin{equation}
\sum_{i = 1}^{k}\gamma_{i} =
 0.\label{eq:cobound}
\end{equation}
in cohomology.  If $r$ is the radial coordinate on $\mathbb{R}^{3}$,
then the set $\set{r > R}$ for sufficiently large $R$ is diffeomorphic
to the total space of the fibre bundle 
\begin{equation}
\xymatrix{
S^{1} \ar[r] &\pa \wt{M} \ar[d]\\  
  & S^{2},
  }
\label{fb.1}\end{equation}
 of degree
$k$ crossed with $(R, \infty)_{r}$.  Let $\overline{\wt{M}}$ be the manifold with boundary obtained from $\wt{M}$ by gluing the boundary $\pa \wt{M}\cong \bbS^3 / \bbZ_k$.  
Let $\wt{X}$ the space obtained from $\overline{\wt{M}}$ by collapsing the fibres of the bundle \eqref{fb.1} on the boundary of $\overline{\wt{M}}$ onto the base $S^2$.  Since the fibres of \eqref{fb.1} are circles, the space $\wt{X}$ is naturally a smooth manifold and  $\wt{X} - \wt{M} \simeq S^{2}$.  We claim that the de Rham cohomology of $\wt{X}$ satisfies $H^{2}_{dR}(\wt{X}) \simeq
\mathbb{R}^{k}$ where a basis is given by the 
$\gamma_{i}, i = 1, \dots, k - 1,$ and the Poincar\'e dual of $S^2= \wt{X}\setminus \wt{M}\subset \wt{X}$, call
it $\overline{\gamma}$.  Indeed, we use the Mayer-Vietoris sequence for
the open cover $U = \set{r > R}$ and $V = \set{r < R + \epsilon}$.
The set $U \cap V$ deformation retracts onto $\pa\wt{M}\cong S^{3}/\bbZ_{k}$, which has
trivial cohomology in degrees 1 and 2. Thus
\begin{equation*}
  H_{dR}^{2}(\wt{X}) \simeq H_{dR}^{2}(U) \oplus H_{dR}^{2}(V).
\end{equation*}
Since $V$ is diffeomorphic to $\wt{M}$ (and thus has the same homology),
and since $U$ deformation retracts onto $S^{2}$, this proves our claim.
 The de Rham cohomology of the orbifold $X = \wt{X}/\bbZ_{k}$ is
identified by definition with the $\bbZ_k$-invariant part of the de Rham cohomology of
$\wt{X}$, and it is isomorphic to the singular cohomology of $X$,
modulo torsion.  By
\eqref{eq:cobound}, the $\gamma_{i}$'s have no $\bbZ_k$-invariant part.  On the other hand, $\overline{\gamma}$ is $\bbZ_k$-invariant, so the space of $\bbZ_k$-invariant de Rham classes is one dimensional, and thus the singular cohomology of
$X$ is one dimensional and \eqref{eq:goal} holds as claimed.  

Of course, our results become really interesting when $M$ is not a global quotient of a manifold with fibred boundary.  This is not hard to construct as the next example shows.     
\begin{example}
Let $\cF$ be a foliation of  a closed smooth oriented $3$-manifold $Y$ by circles corresponding to a good Seifert fibration on a closed surface orbifold $\Sigma$.  As discussed in \cite[Example~3.5]{pomfb}, the Seifert fibration is automatically good provided $\Sigma$ is not a tear drop or a $2$-sphere with two cone points having different cone angles.  We suppose of course that $\cF$ is not a foliation coming from a fibred bundle.  In this case, from 
Definition~\ref{thm:goodassumption}, we know that  $Y= \widetilde{Y}/ \Gamma$ with $\widetilde{Y}$ the total space of a circle bundle over $\widetilde{\Sigma}$ on which $\Gamma$ acts equivariantly.  Moreover, the leaves of $\cF$ are then given by the image of the fibres of the circle bundle over $\widetilde{\Sigma}$ under the quotient map.

Let $\overline{M}$ be a compact oriented manifold with boundary such
that $\pa\overline{M}=Y$.  Notice that since the oriented cobordism
group is trivial in dimension $3$ by \cite{Wall1960}, such a
$\overline{M}$ always exists.  We want to choose $\overline{M}$ such
that the $\Gamma$-bundle $\widetilde{Y}\to Y$ does not extend to a
$\Gamma$-bundle $\widetilde{M}\to M$, where $M= \overline{M}\setminus
\pa\overline{M}$.  In other words, if $\nu: Y\to B\Gamma$ is the
corresponding map in the classifying space $B\Gamma$, we want to
choose $\overline{M}$ such that $\nu$ does not extend to a map from
$\overline{M}$ to $B\Gamma$.   To make such a ch
oice, pick a loop $\gamma: \bbS^1 \to Y$ such that $\nu\circ \gamma$ is a non-trivial element of $\pi_1(B\Gamma)$.  Such a loop exists since we suppose the $\Gamma$-bundle $\widetilde{Y}\to Y$ is not trivial.  

Clearly then, it will be impossible to extend the map $\nu$ to
$\overline{M}$ if the loop $\gamma$ is contractible in $\overline{M}$.
If this is not the case, move the loop $\gamma$ smoothly in the
interior of $\overline{M}$ to an  embedded loop $\gamma': \bbS^1 \to
M$.   Since $Y$ is of dimension $3$ and orientable, its tangent bundle is trivial.  Consequently, the normal bundle of $\gamma(S^1)$ in $Y$ and of $\gamma'(\bbS^1)$ in $M$ are trivial.  We can thus perform a surgery of codimension 3 (see for instance \cite[p.99]{Lawson-Michelsohn}) along $\gamma'(\bbS^1)$ in $\overline{M}$ to obtain a new oriented manifold with boundary $\overline{M}'$ such that $\pa \overline{M}'=Y$ and $\gamma$ is contractible in $\overline{M}'$.  In particular, the map $\nu$ does not extend to $\overline{M}'$.  Thus, $\overline{M}'$ gives an example of a manifold with boundary foliated by good Seifert fibration for which the corresponding $\Gamma$-bundle on $\pa \overline{M}'$ cannot be extended to $\overline{M}'$.  
\label{ngq.1}\end{example}

%
%
%
%  HERE COMES THE BIBLIOGRAPHY
%
%
\providecommand{\bysame}{\leavevmode\hbox to3em{\hrulefill}\thinspace}
\providecommand{\MR}{\relax\ifhmode\unskip\space\fi MR }
% \MRhref is called by the amsart/book/proc definition of \MR.
\providecommand{\MRhref}[2]{%
  \href{http://www.ams.org/mathscinet-getitem?mr=#1}{#2}
}
\providecommand{\href}[2]{#2}

\end{document}